\DeclareMathAlphabet{\mathpzc}{OT1}{pzc}{m}{it}
\numberwithin{equation}{section}
\newtheorem{thm}[equation]{Theorem}
\newtheorem*{thm*}{Theorem}
\newtheorem{lem}[equation]{Lemma}
\newtheorem{prop}[equation]{Proposition}
\newtheorem*{remark*}{Remark}
\newtheorem{remark}[equation]{Remark}
\newtheorem{coro}[equation]{Corollary}
\newlength{\categorywidth}       
\newlength{\infowidth}          
\newlength{\categorysep}
\newlength{\entrysep}
\DeclareMathOperator{\GL}{GL}
\DeclareMathOperator{\SL}{SL}
\DeclareMathOperator{\Sp}{Sp}
\DeclareMathOperator{\Orth}{O}
\DeclareMathOperator{\PSL}{PSL}
\DeclareMathOperator{\Aut}{Aut}
\DeclareMathOperator{\isom}{Isom}
\DeclareMathOperator{\rad}{rad}
\newcommand{\la}{\langle}
\newcommand{\ra}{\rangle}
\renewcommand{\phi}{\varphi}
\renewcommand{\leq}{\leqslant}
\renewcommand{\geq}{\geqslant}
\newcommand{\F}{\Bbb{F}_{q}}
\newcommand{\Fp}{\Bbb{F}_p}
\newcommand{\Fk}{\Bbb{F}_{2^k}}
\begin{document}

\title{Orthogonal groups in characteristic 2 acting on polytopes of high rank}
\author{P. A. Brooksbank}
\address{
	Department of Mathematics\\
	Bucknell University\\
	Lewisburg, PA 17837\\
	United States
}
\email{pbrooksb@bucknell.edu}
\thanks{This work was partially supported by a grant from the Simons Foundation (\#281435 to Peter Brooksbank).
}

\author{J. T. Ferrara}
\address{
	Department of Mathematics\\
	Bucknell University\\
	Lewisburg, PA 17837\\
	United States
}
\email{jtf019@bucknell.edu}
\author{D. Leemans}
\address{
	D\'{e}partement de Math\'{e}matique\\
	C.P.216 -- Alg\`{e}bre et Combinatoire\\
	Boulevard du Triomphe\\
	1050 Bruxelles \\
	Belgium
}
\email{dleemans@ulb.ac.be}

\date{\today}
\keywords{quadratic form, orthogonal group, string C-group, abstract regular polytope}
\begin{abstract}
We show that for all integers $m\geq 2$, and all integers $k\geq 2$, 
the orthogonal groups $\Orth^{\pm}(2m,\Fk)$ act on abstract regular polytopes 
of rank $2m$, and the symplectic groups $\Sp(2m,\Fk)$ act on abstract regular polytopes of rank $2m+1$.
\end{abstract}

\maketitle

\section{introduction}
In a series of three papers, Monson and Schulte studied families of crystallographic 
Coxeter groups with string diagrams~\citelist{\cite{MS1}\cite{MS2}\cite{MS3}}. In those
works, the authors started with the standard real representations and considered reductions 
modulo odd primes, producing finite quotients of the string Coxeter groups
represented in orthogonal spaces over prime fields $\Bbb{F}_p$.
They showed that in many cases the images 
$\rho_0,\ldots,\rho_{d-1}$
of the generating reflections of the string Coxeter group
inherited a certain 
{\em intersection property} possessed by the parent group, 
namely
\begin{align}
\label{eq:IP}
\forall I,J\subseteq \{0,\ldots,d-1\} & & \la \rho_i\colon i\in I\ra
\cap \la \rho_j\colon j\in J\ra=\la \rho_k\colon k\in I\cap J\ra.
\end{align}
Any group generated by involutions satisfying condition~(\ref{eq:IP}) and the additional (string) commuting
condition 
\begin{align}
\label{eq:string}
\forall i,j\in\{0,\ldots,d-1\} & & \rho_i\rho_j\neq \rho_j\rho_i~~\Longleftrightarrow~~|i-j|=1
\end{align}
is called a {\em string C-group of rank $d$}. 
In~\cite[Section 3]{MS3} the authors study the ``3-infinity" string Coxeter groups, showing in~\cite[Theorem 3.1]{MS3} that 
reductions of such groups modulo odd primes are always string C-groups. From this result, one can deduce the following:

\begin{thm}
\label{thm:MS}
Let $p$ be an odd prime, and $V$ an $\Fp$-space of dimension $d\geq 3$.
There is a nondegenerate quadratic form $\phi$ on $V$ and 
$\isom(V_{\phi})'\leq H\leq \isom(V_{\phi})$, where $\isom(V_{\phi})$ is the 
isometry group of $\phi$, such that $H$ is a string C-group of rank $d$ generated by reflections.
\end{thm}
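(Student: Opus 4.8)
The plan is to obtain $H$ as a mod-$p$ reduction of the standard geometric representation of one of the ``3-infinity'' string Coxeter groups, and then to read off the three assertions---that $H$ preserves a nondegenerate $\phi$, that it is a string C-group generated by reflections, and that it contains $\isom(V_\phi)'$---from \cite[Theorem 3.1]{MS3} together with elementary orthogonal geometry. First I would fix the rank-$d$ string Coxeter group $W$ whose diagram has all marks in $\{3,\infty\}$ and write down its Coxeter--Tits representation on $\Bbb{R}^d$: the generating reflections $r_0,\dots,r_{d-1}$ act as $r_i(v)=v-G(v,e_i)e_i$ for a root basis $e_0,\dots,e_{d-1}$, and they preserve the symmetric bilinear form with integral Gram matrix $G=(G(e_i,e_j))$, which is tridiagonal with diagonal entries $2$ and off-diagonal entries $-1$ (for a mark $3$) or $-2$ (for a mark $\infty$). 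All matrix entries of the $r_i$ are integers, so the representation reduces modulo $p$.

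For the fixed odd prime $p$ and dimension $d\geq3$ I would select the pattern of marks so that $\det G$ is prime to $p$; reducing $G$ then gives a \emph{nondegenerate} symmetric bilinear form $B$ on $V=\Fp^d$, and since $p$ is odd I pass to the associated quadratic form $\phi(v)=\tfrac12 B(v,v)$, whose polarization is $B$, so $\phi$ is nondegenerate as well. The reduction $\rho_i$ of each $r_i$ fixes a hyperplane pointwise, negates the reduced root $e_i$, and preserves $B$; hence $\rho_i\in\isom(V_\phi)$ is an orthogonal reflection and $H=\la\rho_0,\dots,\rho_{d-1}\ra\leq\isom(V_\phi)$ is generated by reflections. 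Because the diagram of $W$ is a string, the commuting pattern~(\ref{eq:string}) passes to the $\rho_i$. By~\cite[Theorem 3.1]{MS3} the reduction satisfies the intersection property~(\ref{eq:IP}), and together with~(\ref{eq:string}) this shows that $H$ is a string C-group of rank $d$.

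It remains to prove the lower bound $\isom(V_\phi)'\leq H$, which I expect to be the crux. The obstacle is that, a priori, a reflection subgroup can be a proper subgroup of the full isometry group, so I must show that $H$ is large. The strategy would be to verify first that the reduced roots $e_0,\dots,e_{d-1}$ span $V$ and that $H$ acts irreducibly on $V$---both of which follow from connectedness of the string diagram once $p$ is prime to $\det G$---and then to invoke a generation theorem for classical groups: an irreducible subgroup of $\isom(V_\phi)$ generated by reflections of a nondegenerate orthogonal space of dimension $d\geq 3$ contains $\Om(V_\phi)=\isom(V_\phi)'$. Combined with the trivial containment $H\leq\isom(V_\phi)$ this gives the result. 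The two points I would scrutinize most carefully are the uniform verification of nondegeneracy and irreducibility across all odd $p$ (choosing the mark pattern appropriately), and the precise invocation of the generation theorem, including the handling of the low-dimensional exceptional cases where $\isom(V_\phi)'$ may fail to coincide with $\Om(V_\phi)$.
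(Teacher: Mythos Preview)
The paper does not supply its own proof of Theorem~\ref{thm:MS}; it is stated as a consequence of \cite[Theorem~3.1]{MS3}, with the sentence ``From this result, one can deduce the following'' serving as the entire justification. Your proposal is therefore not competing with a proof in the paper but rather filling in the deduction the paper leaves implicit, and in that capacity it is broadly on target: the $3$-infinity reduction, the integrality of the Gram matrix, the choice of mark pattern to keep $\det G$ prime to $p$, and the appeal to \cite[Theorem~3.1]{MS3} for the intersection property are exactly what the paper is pointing to.

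The one place I would push back is your final step. The ``generation theorem'' you invoke---that an irreducible reflection subgroup of $\isom(V_\phi)$ in dimension $d\geq 3$ contains $\Omega(V_\phi)$---is not a result one can cite off the shelf in that form; the classification of irreducible groups generated by reflections (Zalesski\u{\i}--Sere\v{z}kin, Wagner) has genuine exceptions, and your sketch of irreducibility (``follows from connectedness of the string diagram once $p\nmid\det G$'') is not a proof. In practice Monson and Schulte identify the image group $H$ explicitly in \cite[Theorem~3.1]{MS3}, case by case according to the diagram and the prime, and the containment $\isom(V_\phi)'\leq H\leq\isom(V_\phi)$ is read off directly from that identification rather than recovered by an abstract irreducibility argument. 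So the honest route is to cite their explicit determination of $H$ for the lower bound, rather than to re-derive it as you propose.
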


The modular reduction method employed by Monson and Schulte
also works in some cases over extensions of $\Fp$, but it doesn't apply at all to fields of characteristic 2.
One key difference in characteristic 2 is that reflections no longer exist; they are replaced by
natural analogues called {\em symmetries} (defined in Section~\ref{sec:symmetries}).
The main objective of the current paper is to prove the following supplement to Theorem~\ref{thm:MS}:

\begin{thm}
\label{thm:main-orth}
Let $k$ be a positive integer, $\Fk$ the field of $2^k$ elements, and $V$ an $\Fk$-space of dimension $d\geq 3$.
Then there is a quadratic form $\phi$ on $V$ such that $\isom(V_{\phi})$ is a string C-group of rank $d$
generated by symmetries if, and only if, $k\geq 2$. The radical of $V$ is 0- or 1-dimensional according as $\dim V$ is even or odd.
\end{thm}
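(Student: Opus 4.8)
The plan is to realize the required generators as \emph{symmetries} attached to an explicitly chosen basis, and then to read off all three assertions from the Gram matrix of the polar form. Write $B$ for the alternating bilinear form $B(u,v)=\phi(u+v)+\phi(u)+\phi(v)$ associated to $\phi$, and recall that the symmetry determined by a non-singular vector $v$ is $\rho_v\colon x\mapsto x+\phi(v)^{-1}B(x,v)\,v$. A direct computation gives $\rho_u\rho_v(x)-\rho_v\rho_u(x)=\phi(u)^{-1}\phi(v)^{-1}B(u,v)\bigl(B(x,v)\,u-B(x,u)\,v\bigr)$, so for non-singular $u,v$ that are linearly independent and not both in the radical of $B$ the symmetries $\rho_u$ and $\rho_v$ commute if and only if $B(u,v)=0$. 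Thus the string condition (\ref{eq:string}) is equivalent to prescribing an orthogonality pattern on the generating vectors, and this will hold over every field regardless of $k$.

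For the ``if'' direction (given $k\geq 2$) I would fix a basis $v_0,\dots,v_{d-1}$ of $V$, set each $\phi(v_i)$ to be a suitably chosen nonzero scalar and each $B(v_{i-1},v_i)$ to be nonzero, and impose $B(v_i,v_j)=0$ whenever $|i-j|\geq 2$, so that the Gram matrix $M$ of $B$ is tridiagonal with zero diagonal. The commuting criterion then yields (\ref{eq:string}) immediately (no $v_i$ lies in the radical, since each pairs nontrivially with a neighbour). Expanding $\det M$ along the last row gives the recurrence $D_n=B(v_{n-2},v_{n-1})^2\,D_{n-2}$ with $D_0=1$ and $D_1=0$; hence $D_{2m}=\prod_i B(v_{2i-2},v_{2i-1})^2\neq 0$ while $D_{2m+1}=0$, and the nonvanishing of the leading $2m\times 2m$ minor forces the rank to be $2m$ in the odd case. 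This is exactly the radical claim: $B$ is nondegenerate when $\dim V$ is even and has a one-dimensional radical when $\dim V$ is odd. I would then argue that the $d$ symmetries generate all of $\isom(V_\phi)$: since an isometry group in characteristic $2$ is generated by its symmetries, it suffices to show the subgroup $G=\la\rho_{v_0},\dots,\rho_{v_{d-1}}\ra$ contains every symmetry, which I would do by induction on $d$, the interval subgroups being the isometry groups of the subspaces $\la v_i,\dots,v_j\ra$ and known generation results for the classical groups over $\Fk$ allowing one to climb to the full group. The scalars $\phi(v_i)$ and $B(v_{i-1},v_i)$ are chosen precisely to keep these subgroups large.

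The crux is the intersection property (\ref{eq:IP}). Using the standard reduction for string C-groups, it is enough—granting that the two facet subgroups $A=\la\rho_{v_0},\dots,\rho_{v_{d-2}}\ra$ and $A'=\la\rho_{v_1},\dots,\rho_{v_{d-1}}\ra$ are string C-groups by induction—to verify the single identity $A\cap A'=\la\rho_{v_1},\dots,\rho_{v_{d-2}}\ra$. I would interpret $A$ and $A'$ as the isometry groups of the hyperplanes $\la v_0,\dots,v_{d-2}\ra$ and $\la v_1,\dots,v_{d-1}\ra$, show that any common element must stabilize the codimension-two subspace $\la v_1,\dots,v_{d-2}\ra$ together with the relevant orthogonal complements, and conclude that it lies in the inner interval subgroup. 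I expect this geometric intersection computation to be the main obstacle: the subspaces involved may be degenerate (in the odd-dimensional case the one-dimensional radical must be tracked throughout), and the argument relies on the facet subgroups being large enough that their intersection is not accidentally inflated—again forcing $k\geq 2$.

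Finally, for the ``only if'' direction I would rule out $k=1$. The orthogonality pattern above can still be imposed over $\mathbb{F}_2$, so the failure is not with (\ref{eq:string}) but with full generation and (\ref{eq:IP}): over $\mathbb{F}_2$ the orthogonal and symplectic groups in the relevant ranks are too small, coinciding in low rank with symmetric groups and other exceptional groups, and a direct analysis of these base cases—propagated by the same inductive reduction—shows that $\isom(V_\phi)$ cannot simultaneously be generated by a string of $d$ symmetries and satisfy the intersection property.
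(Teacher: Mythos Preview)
Your outline is essentially the paper's approach: a tridiagonal Gram matrix for the polar form, the commutator identity for symmetries yielding the string condition~(\ref{eq:string}), the determinant recursion for the radical, an inductive generation argument, and the standard two-facet reduction (Proposition~\ref{prop:IP}) for the intersection property, with the latter reduced to a geometric intersection computation. Two places need sharpening.

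First, do not identify the interval subgroups with the full isometry groups of the subspaces $\langle v_i,\ldots,v_j\rangle$. When that subspace is odd-dimensional with singular radical, $\isom$ carries an $\F^\times$-factor not reached by symmetries (see~(\ref{eq:sing-odd})). The paper's generation lemma (Lemma~\ref{lem:generate}) establishes only $\langle\sigma_{v_i},\ldots,\sigma_{v_j}\rangle=\Sigma_{\langle v_i,\ldots,v_j\rangle}$, the group generated by \emph{all} symmetries supported on the span, and the key intersection result (Proposition~\ref{prop:intersection}) is stated and proved for these $\Sigma$-groups rather than for isometry groups. Your ``common element stabilizes the codimension-two subspace'' heuristic has the right shape, but the actual content is a case analysis on the parity of $n$ and the singularity of the radical. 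Relatedly, the paper is explicit about the scalars: each $\alpha_i$ is drawn from a set $A\subset\F^\times$ ensuring that $\langle v_i,v_{i+1}\rangle$ is asingular and that $\sigma_{v_i}\sigma_{v_{i+1}}$ has exact order $q+1$; this is what drives the $b=2$ base of the generation induction, and ``suitably chosen'' should be made this precise.

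Second, your $k=1$ argument misidentifies the obstruction. It is not that low-rank orthogonal groups over $\mathbb{F}_2$ are exceptionally small and that this propagates inductively. The point is cleaner and requires no case analysis: the normalization in Section~\ref{sec:string} shows that \emph{every} string of $d$ symmetries forces the form to be $\Phi(\alpha_1,\ldots,\alpha_{d-1})$ for some $\alpha_i\in\F^\times$. Over $\mathbb{F}_2$ there is only one nonzero scalar, so the form is uniquely determined, every product $\sigma_{v_i}\sigma_{v_{i+1}}$ has order $3$, and the group generated is a symmetric group rather than the full orthogonal group. The failure is thus one of generation, uniformly in $d$.
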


Interest in string C-groups stems from their intimate connection to highly symmetric 
incidence structures called {\em abstract regular polytopes}. 
Indeed a fundamental result in~\cite[Section 2]{MS2002} shows that $\mathcal{P}$ is an abstract regular polytope if, and only if, 
$\Aut(\mathcal{P})$ is a string C-group---the
studies of abstract regular polytopes and string C-groups are equivalent.
Refining the choice of $\phi$ in Theorem~\ref{thm:main-orth} we prove the following result concerning polytopes of high
rank associated to classical groups. 

\begin{coro}
\label{coro:main-sp}
For each integer $k\geq 2$, positive integer $m$, and $\epsilon\in\{-,+\}$,
the orthogonal group $\Orth^{\epsilon}(2m,\Fk)$ acts as the group of automorphisms
of an abstract regular polytope of rank $2m$, and the symplectic group $\Sp(2m,\Fk)$ acts as
the group of automorphisms of an abstract regular polytope of rank $2m+1$.
\end{coro}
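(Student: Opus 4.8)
The plan is to derive Corollary~\ref{coro:main-sp} directly from Theorem~\ref{thm:main-orth} together with the equivalence between abstract regular polytopes and string C-groups recalled from~\cite{MS2002}. Since that equivalence identifies $\Aut(\mathcal{P})$ with a string C-group, it suffices in each case to realize the relevant classical group as the isometry group $\isom(V_\phi)$ of a suitable quadratic form of the correct rank. For the orthogonal groups I would set $d=2m$ and for the symplectic groups $d=2m+1$, applying Theorem~\ref{thm:main-orth} in each dimension. The hypothesis $d\geq 3$ is satisfied for all $m\geq 2$ in the orthogonal case and all $m\geq 1$ in the symplectic case, which matches the stated range.

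For the orthogonal case, take $d=2m$. By Theorem~\ref{thm:main-orth} the radical of $V$ is then $0$-dimensional, so $\phi$ is nondegenerate and $\isom(V_\phi)$ is, by definition, a full orthogonal group $\Orth^{\epsilon}(2m,\Fk)$, the sign $\epsilon$ being determined by the Witt type (equivalently, the Arf invariant) of $\phi$. The theorem already supplies one such form; the content of the corollary is that \emph{both} signs can be obtained. I would therefore revisit the explicit form built in the proof of Theorem~\ref{thm:main-orth} and show that a one-parameter modification of it---altering a single summand while preserving the generating symmetries, the string condition~(\ref{eq:string}), and the intersection property~(\ref{eq:IP})---reverses the Arf invariant. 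Computing the Arf invariant of the modified form then identifies which parameter values give $\epsilon=+$ and which give $\epsilon=-$, yielding both orthogonal groups.

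For the symplectic case, take $d=2m+1$. Now Theorem~\ref{thm:main-orth} produces a form $\phi$ whose radical $R$ is $1$-dimensional, say $R=\la e\ra$ with $\phi(e)\neq 0$. The key point is the classical characteristic-$2$ isomorphism $\isom(V_\phi)\cong\Sp(2m,\Fk)$, which I would establish as follows. Any isometry $g$ preserves the alternating form $B$ polarizing $\phi$, hence preserves its radical $R$; since $\phi(ge)=\phi(e)$ and the squaring map is injective on $\Fk$, $g$ must fix $e$. Thus $g$ descends to an isometry of the nondegenerate symplectic space $(V/R,\bar B)$, giving a homomorphism $\isom(V_\phi)\to\Sp(2m,\Fk)$. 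Injectivity follows because a $g$ acting trivially on $V/R$ has the form $g(v)=v+f(v)e$, and preservation of $\phi$ forces $f(v)^2\phi(e)=0$, whence $f=0$. Surjectivity follows from the standard lifting of symplectic transformations through the radical (or by comparing group orders). Combining this isomorphism with the string C-group structure from Theorem~\ref{thm:main-orth} and the polytope equivalence yields the rank-$(2m+1)$ polytope for $\Sp(2m,\Fk)$.

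The main obstacle I anticipate is the orthogonal case: not the identification $\isom(V_\phi)=\Orth^{\epsilon}(2m,\Fk)$, which is immediate, but the verification that the construction underlying Theorem~\ref{thm:main-orth} is flexible enough to produce both Witt types. Concretely, one must track how a controlled change in $\phi$ interacts simultaneously with (i) the generating symmetries, (ii) the string condition~(\ref{eq:string}), (iii) the intersection property~(\ref{eq:IP}), and (iv) the Arf invariant. Keeping (i)--(iii) intact while toggling (iv) is the delicate part; once an explicit pair of forms of opposite type is in hand, the remainder of the argument is formal.
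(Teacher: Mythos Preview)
Your plan for the orthogonal case is essentially the paper's approach: vary the parameters $\alpha_i$ (all chosen from the set $A$, so that the string C-group conclusion of Theorem~\ref{thm:main-orth} holds automatically) and compute the Arf invariant to hit both signs. One clarification: your worry about preserving (i)--(iii) under perturbation is unnecessary, since the proof of Theorem~\ref{thm:main-orth} already shows that \emph{every} choice of scalars from $A$ yields a string C-group. So the only work is the Arf computation (the paper sets $\alpha_1=\alpha_{d-1}=\lambda$, $\alpha_i=\mu$ otherwise, and toggles between $\mu=\lambda$ and $\mu/\lambda^2\in N$).

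There is, however, a genuine gap in your symplectic case. You write that Theorem~\ref{thm:main-orth} ``produces a form $\phi$ whose radical $R$ is $1$-dimensional, say $R=\langle e\rangle$ with $\phi(e)\neq 0$.'' The theorem guarantees only that $R$ is $1$-dimensional; it says nothing about whether $R$ is singular or nonsingular. If $R$ happens to be singular, then by~(\ref{eq:sing-odd}) one has $\isom(V_\phi)\cong O_2(\isom(V_\phi))\rtimes[\F^\times\times\Orth^\pm(V/V^\perp)]$, which is \emph{not} $\Sp(2m,\Fk)$, and your argument collapses. What the paper does---and what you must do---is invoke the explicit description of the radical in Lemma~\ref{lem:nondeg}: the generator is $z=(1,0,\beta_1,0,\ldots,0,\beta_m)$ and it is singular precisely when $1+\beta_1^{2}+\cdots+\beta_m^{2}=0$. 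Since $\beta_m$ depends on $\alpha_{d-1}$ while $\beta_1,\ldots,\beta_{m-1}$ do not, one can vary $\alpha_{d-1}$ within $A$ to force $\phi(z)\neq 0$. Only then does the isomorphism~(\ref{eq:nonsing-odd}) apply. (Incidentally, that isomorphism is already recorded in the paper, so your direct verification of it, while correct, is not needed.)

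In short: you correctly flagged the orthogonal case as requiring an Arf calculation, but you overlooked that the symplectic case requires a parallel argument---controlling the singularity of the radical---before the identification with $\Sp(2m,\Fk)$ is available.
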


Very few constructions of abstract regular polytopes of high rank are known.
Indeed, aside from the Monson-Schulte constructions and the ones presented here
to prove Corollary~\ref{coro:main-sp}, the only
existing constructions for arbitrary rank $r\geq 3$ are for the symmetric groups~\cite{FL, FLM1}
and the alternating groups~\cite{FLM}.

\section{symplectic and orthogonal spaces}
\label{sec:orth-space}
We now introduce the necessary background on classical groups and their geometries;
the reader is referred to Taylor's book~\cite{Taylor} for further information.
\smallskip

\begin{quotation}
{\em For the remainder of the paper, $V$ will denote a vector space of dimension $d\geq 3$ 
over the finite field $\F$ of $q=2^k$ elements.}
\end{quotation}
\smallskip

Let $(\,,\,)$ denote a symmetric bilinear form on $V$.
For $U\leq V$, the subspace 
\[
U^{\perp}=\{v\in V\colon (v,U)=0\}
\]
 is the {\em perp-space} of $U$,
$\rad(U)=U^{\perp}\cap U$ is the {\em radical} of $U$, and $U$ is {\em nondegenerate} if $\rad(U)=0$.
For $S\subseteq  \GL(V)$, the subspace 
\[
[V,S]=\la v-vs\colon v\in V,\,s\in S\ra
\]
 is the {\em support} of $S$.
Let $\phi\colon V\to \F$ be a quadratic form on $V$ whose associated symmetric form
is $(\,,\,)$ in the sense that 
\begin{align*}
\forall u,v\in V & &  (u,v)=\phi(u+v)+\phi(u)+\phi(v).
\end{align*}
When we wish to stress that we are regarding $V$ as an {\em orthogonal space}
equipped with $\phi$, rather than as a {\em symplectic space} equipped with $(\,,\,)$, 
we shall do so by writing $V_{\phi}$. For $U\leq V_{\phi}$, we write $U_{\phi}$ to denote the orthogonal
space obtained by restricting $\phi$ to $U$. A nonzero vector $u\in V$  is {\em singular} if $\phi(u)=0$ 
and {\em nonsingular} otherwise.  A subspace $U$ of $V$ is {\em totally singular} if $\phi(u)=0$ for all $u\in U$.

\subsection{Matrices} We will find it convenient later on to compute with matrices representing
symmetric and quadratic forms. Fix a basis $v_1,\ldots,v_d$ of $V$, and put $B:=[[(v_i,v_j)]]$.
Writing elements of $V$ as row vectors relative to our fixed basis, one evaluates the
bilinear form as 
\begin{align*}
\forall u,v\in V & &
(u,v)=u\,B\,v^{{\rm tr}}. 
\end{align*}
If $\Phi=[[\alpha_{ij}]]$, where $\alpha_{ij}=(v_i,v_j)$ if $i<j$, $\alpha_{ij}=0$ if $i>j$,
and $\alpha_{ii}=\phi(v_i)$, then 
\begin{align*}
\forall v\in V & &
\phi(v)=v\,\Phi\, v^{{\rm tr}}, 
\end{align*}
and $B=\Phi+\Phi^{{\rm tr}}$.
If $w_1,\ldots,w_d$ is another basis of $V$ and $C$ is the matrix whose rows are the vectors
representing the $w_i$ relative to $v_1,\ldots,v_d$, then $CBC^{{\rm tr}}=[[(w_i,w_j)]]$ is the matrix
representing $(\;,\;)$ relative to $w_1,\ldots,w_d$.

\subsection{Classification of lines}
We work with 2-dimensional subspaces of orthogonal spaces,
which we often think of as lines in the associated projective space. For a line $L$ in an orthogonal space
$V_{\phi}$, one of the following holds:
\begin{itemize}
\item $L$ is {\em asingular}: all nonzero vectors are nonsingular.
\item $L$ is {\em singular}: $L=\la e,b\ra$ with $\phi(e)=(e,b)=0$ and $\phi(b)\neq 0$.
\item $L$ is {\em hyperbolic}: $L=\la e,f\ra$ with $\phi(e)=\phi(f)=0$ and $(e,f)\neq 0$.
\item $L$ is {\em totally singular}: all nonzero vectors are singular.
\end{itemize}
The four possibilities are ordered according to how many singular points (1-spaces) each possesses: $0,1,2,q+1$, respectively.

\subsection{Isometries}
Associated with symplectic and orthogonal spaces are their groups of isometries:
\begin{align}
\label{eq:isom-bil}
\isom(V) & =  \{g\in\GL(V)\colon (ug,vg)=(u,v)~\mbox{for all}~u,v\in V\},~\mbox{and} \\
\label{eq:isom-quad}
\isom(V_{\phi}) & =  \{g\in\GL(V)\colon \phi(vg)=\phi(v)~\mbox{for all}~v\in V\}.
\end{align}
When $V$ is nondegenerate, $\dim V=2m$ is even and $\isom(V)$ is often called the 
{\em symplectic group} on $V$, denoted $\Sp(V)$.
In this case there are two isomorphism types of orthogonal group corresponding
to two isometry types of quadratic form. These are distinguished by the dimension of a
maximal totally singular subspace of $V$, which can be either $m$ or $m-1$. In the former case, the
associated orthogonal group is denoted $\Orth^+(V_{\phi})$, and in the latter case $\Orth^-(V_{\phi})$.

We will also need to work with odd-dimensional (degenerate) orthogonal spaces
$V_{\phi}$ when their associated symplectic spaces have 1-dimensional radicals. 
There are two cases to consider. 
First, if $V^{\perp}$ is a singular point, then $\psi(v+V^{\perp}):=\phi(v)$ is a well-defined nondegenerate
quadratic form on $V/V^{\perp}$, and 
\begin{equation}
\label{eq:sing-odd}
\isom(V_{\phi})\cong O_2(\isom(V_{\phi}))\rtimes[\F^{\times}\times\Orth^{\pm}(V/V^{\perp})],
\end{equation} 
where $O_2(\isom(V_{\phi}))$---the largest normal 2-subgroup of $\isom(V_{\phi})$---is an abelian group 
isomorphic to the
additive group of the $\F$-space $V/V^{\perp}$.
Hence, there are two isomorphism types for $\isom(V_{\phi})$, corresponding to the two isometry types of 
$V/V^{\perp}$. The second possibility is that
$V^{\perp}$ is a nonsingular point. Here, one can only induce the bilinear form on $V/V^{\perp}$, and
this leads to the well-known isomorphism 
\begin{equation}
\label{eq:nonsing-odd}
\isom(V_{\phi})\cong\Sp(V/V^{\perp}).
\end{equation}
{\em Henceforth, when we say $V_{\phi}$ is an orthogonal space, it is assumed that $\dim V^{\perp}\leq 1$}.

\section{symmetries of orthogonal spaces}
\label{sec:symmetries}
We shall not require a detailed description of the involution classes of the symplectic and orthogonal groups; 
we need just one special type.
If $x\in V$ with $\phi(x)\neq 0$, then 
\begin{align}
\label{eq:sigma}
\sigma_x\colon v\mapsto v+\frac{(v,x)}{\phi(x)}x & & (v\in V)
\end{align}
is an involutory isometry $V_{\phi}$ known as a {\em symmetry}. 
For $\alpha\in\F$, note $\sigma_{\alpha x}=\sigma_x$, so $\sigma_x$ is uniquely determined 
by the nonsingular 1-space $\la x\ra$. The group $\la \sigma_x\ra$ is the subgroup of 
$\isom(V_{\phi})$ that induces the identity on $\la x\ra^{\perp}$ 
and on $V/\la x\ra$. 

Symmetries have very nice properties that recommend them as generators for string C-groups.
First, symmetries $\sigma_x$ and $\sigma_{y}$ commute if, and only if, $(x,y)=0$.
 Secondly, the support of a symmetry is simply the nonsingular 1-space that defines it, namely
$[V,\la \sigma_x\ra]=\la x\ra$.
Finally, it is well known that $\isom(V_{\phi})$ is generated by symmetries when 
$V_{\phi}$ is {\em nonsingular}~\cite[Theorem~11.39]{Taylor}.
However, we shall need more detailed information about the groups generated by symmetries for our broader 
notion of orthogonal space. 
For any subset  $X\subseteq V$, put 
\begin{equation}
\label{eq:sigma-gp}
\Sigma_X:=\la \sigma_x\colon x\in X~\mbox{is nonsingular}\ra, 
\end{equation}
the group generated by the symmetries determined by nonsingular points in $X$.

\begin{prop}
\label{prop:intersection}
Let $V_{\phi}$ be an orthogonal space, and 
$W_1,W_2$ orthogonal subspaces of dimension $n\geq 3$
such that $U:=W_1\cap W_2$ is an orthogonal subspace of dimension $n-1$.
Then 
\[
\Sigma_{W_1}\cap \Sigma_{W_2}=\Sigma_{U}.
\]
\end{prop}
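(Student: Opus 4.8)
The inclusion $\Sigma_U\subseteq\Sigma_{W_1}\cap\Sigma_{W_2}$ is immediate: since $U\subseteq W_1$ and $U\subseteq W_2$, every symmetry generating $\Sigma_U$ already appears among the generators of each $\Sigma_{W_i}$, and $X\mapsto\Sigma_X$ is monotone. The substance is the reverse inclusion, and my plan is to control $\Sigma_{W_i}$ through two invariants of its elements: their support and their fixed space. For the support I would first record the elementary identities $[V,g^{-1}]=[V,g]$ and $[V,gh]\subseteq[V,g]+[V,h]$, the latter from $v-v(gh)=(v-vg)+(vg)(1-h)$; together with $[V,\la\sigma_x\ra]=\la x\ra$ these give $[V,\Sigma_W]=\sum_x[V,\sigma_x]=\la x\colon x\in W\text{ nonsingular}\ra$. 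A short argument with the line classification then shows that an orthogonal subspace (radical of dimension $\leq 1$, dimension $\geq 2$) is spanned by its nonsingular vectors: it is not totally singular, and if all its nonsingular vectors lay in a hyperplane the form would vanish identically, using $q\geq 4$. Hence $[V,\Sigma_W]=W$.

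Now take $g\in\Sigma_{W_1}\cap\Sigma_{W_2}$. The support computation yields $[V,g]\subseteq[V,\Sigma_{W_1}]\cap[V,\Sigma_{W_2}]=W_1\cap W_2=U$. For the fixed space, each generating symmetry $\sigma_x$ of $\Sigma_{W_i}$ fixes $W_i^{\perp}$ pointwise (if $v\in W_i^{\perp}$ then $(v,x)=0$), so $g$ fixes $W_1^{\perp}+W_2^{\perp}=(W_1\cap W_2)^{\perp}=U^{\perp}$ pointwise. Conversely, any isometry fixing $U^{\perp}$ pointwise automatically has $[V,g]\subseteq(U^{\perp})^{\perp}=U$ and stabilizes $U$. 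Thus the reverse inclusion reduces to the key lemma that the pointwise stabilizer $\mathrm{Fix}(U^{\perp})$ of $U^{\perp}$ in $\isom(V_{\phi})$ is exactly $\Sigma_U$: this sandwiches the three groups as $\Sigma_U\subseteq\Sigma_{W_1}\cap\Sigma_{W_2}\subseteq\mathrm{Fix}(U^{\perp})=\Sigma_U$, forcing equality.

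It remains to prove the lemma, where the real work and the main obstacle lie. When $U$ is nondegenerate, $V=U\perp U^{\perp}$, so any $g$ fixing $U^{\perp}$ pointwise restricts to an isometry of the nonsingular space $U_{\phi}$ and is trivial off $U$; Taylor's generation theorem~\cite[Theorem~11.39]{Taylor} then gives $\isom(U_{\phi})=\Sigma_U$, as needed. The difficulty is the degenerate case $\rad(U)=\la z\ra$, where $V\neq U\oplus U^{\perp}$ and Taylor's theorem does not apply directly. Here I would invoke the explicit structure of $\isom(U_{\phi})$ from~(\ref{eq:sing-odd}) or~(\ref{eq:nonsing-odd}): since $g$ fixes $z\in U^{\perp}$, it lies in the subgroup centralizing the radical line, and I expect this subgroup to be generated precisely by the symmetries $\sigma_x$ with $x\in U$ (in the singular case this amounts to discarding the order-$(q-1)$ factor $\F^{\times}$ that scales $z$, while in the nonsingular case every isometry already fixes $z$ and the symmetries project onto generating transvections of $\Sp(U/\la z\ra)$). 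Verifying this identification in the singular-radical case, and confirming the perp-arithmetic $W_1^{\perp}+W_2^{\perp}=U^{\perp}$ when the ambient $V$ itself carries a $1$-dimensional radical, is the crux; everything else is formal.
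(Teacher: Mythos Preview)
Your support/fixed-space reduction is a genuinely different packaging from the paper's. The paper never passes to $\mathrm{Fix}(U^{\perp})$; instead it stays inside $H_i=\Sigma_{W_i}$, analyzes the stabilizer $J_i=\mathrm{stab}_{H_i}(U)$ case-by-case (according to the parity of $n$ and whether the relevant radical vector is singular), and shows directly that $J_1\cap J_2=\Sigma_U$. Your route has the appeal of isolating an intrinsic statement about $U$ alone, but two of the steps you flag as ``the crux'' are where essentially all the work lies, and neither is as routine as your sketch suggests.

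First, the key lemma $\mathrm{Fix}(U^{\perp})=\Sigma_U$ in the singular-radical case is \emph{not} obtained by merely invoking~(\ref{eq:sing-odd}). What must be shown is that the symmetries $\sigma_x$ with $x\in U$ generate the full $z$-fixing subgroup $Q\rtimes K$ (with $Q=O_2(\isom(U_{\phi}))$ and $K\cong\Orth^{\pm}(U/\la z\ra)$), and this is precisely the substantive step. The paper does it by a specific trick: $K=\Sigma_{U_0}$ for a nondegenerate hyperplane $U_0<U$; a symmetry $\sigma_x$ with $x\in U\setminus U_0$ conjugates $K$ to a different complement of $Q$; since $Q$ acts regularly on its complements and is the natural orthogonal $K$-module, this forces $\la K,\sigma_x\ra=Q\rtimes K$. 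Your proposal supplies nothing in place of this. Second, the perp-identity $W_1^{\perp}+W_2^{\perp}=U^{\perp}$ can genuinely fail when $V$ carries a radical $R=\la r\ra$ with $r\in W_1+W_2$ but $r\notin W_i$ (e.g.\ $U$ a hyperbolic plane, $W_1=U\oplus\la e\ra$, $W_2=U\oplus\la e+r\ra$ with $e$ singular); then $W_1^{\perp}=W_2^{\perp}$ is a proper hyperplane of $U^{\perp}$, and your sandwich $\Sigma_{W_1}\cap\Sigma_{W_2}\subseteq\mathrm{Fix}(U^{\perp})$ breaks. You would need to fall back on the support condition $[V,g]\subseteq U$ and argue more carefully, at which point you are essentially redoing the paper's stabilizer analysis. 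The paper's approach avoids both difficulties by never computing perps in the ambient $V$.
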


\begin{proof}
Put $G:=\isom(V_{\phi})$ and, for $i=1,2$, put $H_i:=\Sigma_{W_i}$. Note, $H_i\leq{\rm stab}_{G}(W_i)$, 
so $H_1\cap H_2\leq{\rm stab}_{H_1}(U)\cap{\rm stab}_{H_2}(U)$. Evidently $\Sigma_U\leq H_1\cap H_2$, 
so we must establish the reverse inclusion. Fix $i\in\{1,2\}$, and consider the stabilizer $J_i:={\rm stab}_{H_i}(U)$.
\smallskip

First, suppose $n$ is even, so that $\rad(W_i)=0$, $\rad(U)=\la z\ra$ for some $z\in U$, and
$J_i={\rm stab}_{H_i}(\la z\ra)$.
If $z$ is nonsingular, $J_i=\la \sigma_z\ra\times\isom(U)=\Sigma_U$, so
\[
H_1\cap H_2= J_1\cap J_2=\la \sigma_z\ra\times\isom(U)=\Sigma_U.
\]
Thus, we may assume $z$ is singular. Fix a nondegenerate $(n-2)$-space $U_0\leq U$ not containing $z$,
and choose a singular vector $e_i\in (W_i\cap U_0^{\perp})-U$ such that $(z,e_i)=1$.
Then $J_i$ factorizes as $J_i=Q\rtimes {\rm stab}_{J_i}(\la e_i\ra)$, where $Q=O_2(J_i)$ is the group
inducing the identity on $U/\la z\ra$. Furthermore, ${\rm stab}_{J_i}(\la e_i\ra)=\la h_i\ra\times K$,
where $h_i$ maps $z\mapsto \alpha z$, $e_i\mapsto\alpha^{-1}e_i$ and is the identity on $U_0$, 
and $K\cong\isom(U_0)$ is the subgroup of $J_i$ inducing the identity on $\la z,e_i\ra$.
We have omitted the subscripts on $Q$ and $K$ since these groups are common to both $J_1$ and $J_2$,
while $h_i\not\in J_{3-i}$. Indeed, $J_1\cap J_2=Q\rtimes K$, so it suffices to show that $Q\rtimes K\leq\Sigma_U$.

Since $U_0$ is nondegenerate, $K=\Sigma_{U_0}$. Note, if $x\in U- U_0$ nonsingular, then
$\sigma_x\in\Sigma_U$ does not normalize $K$. As $Q$ acts transitively on its set of complements in $Q\rtimes K$,
there exists $1\neq b\in Q$ such that $K^b=K^{\sigma_x}$, so
$b\sigma_x$ normalizes $K$. Since $N_{Q\rtimes K}(K)=K$, it follows that $b\sigma_x\in K$,
so $b\in\la K,\sigma_x\ra\leq\Sigma_U$. Finally, $Q$ is isomorphic to the natural orthogonal $K$-module
$U_0$, so $Q\rtimes K=\la K,b\ra\leq \Sigma_U$.
\smallskip

Next, suppose $n$ is odd. Then $W_i$ has a 1-dimensional radical $\la z_i\ra$ that can
be singular or nonsingular, and $U$ is nondegenerate. If $z_i$ is nonsingular, then 
$H_i=\la\sigma_{z_i}\ra\times\isom(W_i)$
and $J_i=\la \sigma_{z_i} \ra\times \Sigma_U$, where $\Sigma_U\cong\isom(U)$. 
If $z_i$ is singular then, as in the singular case above,  
$H_i=O_2(H_i)\rtimes {\rm stab}_{H_i}(U)$,
so that $J_i= {\rm stab}_{H_i}(U)=\Sigma_U\cong\isom(U)$. There are now three cases to consider:
$z_1$ and $z_2$ are both nonsingular, both singular, or one is singular and one is nonsingular.
Observe, however, if $z_i$ is nonsingular then $\sigma_{z_i}\not\in J_{3-i}$; thus, 
it is clear from the structure of $J_i$ that $J_1\cap J_2=\Sigma_U$ in all three cases.
\end{proof}

We will also make use of the following elementary result on generation by symmetries
in orthogonal 3-spaces. 

\begin{lem} 
\label{lem:plane}
If $q\geq 4$, $V_{\phi}$ is a $3$-dimensional orthogonal space over $\F$, and $L$ is an asingular line
of $V$,  then $\Sigma_L$ is maximal in $\Sigma_V$.
\end{lem}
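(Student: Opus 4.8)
The plan is to identify both $\Sigma_L$ and $\Sigma_V$ explicitly and then read off maximality from the subgroup structure. Since $\dim V=3$ is odd and $(\,,\,)$ is alternating in characteristic $2$, the radical $V^\perp$ is odd-dimensional, so the standing convention forces $V^\perp=\la z\ra$, a single point. First I would check that an asingular line is automatically nondegenerate for $(\,,\,)$: were the bilinear form to vanish on $L$, the restriction $\phi|_L$ would be additive with $\phi(\alpha v)=\alpha^2\phi(v)$, and as every element of $\F$ is a square this map would have a nonzero root, contradicting asingularity. Hence $V=L\perp\la z\ra$, $L^\perp=\la z\ra=V^\perp$, and $L$ is an anisotropic (minus-type) plane, so $\Sigma_L\cong\isom(L_\phi)\cong\Orth^-(2,\F)\cong D_{2(q+1)}$, acting on $V$ as the identity on $\la z\ra$. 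Because $z\in V^\perp$, every symmetry fixes $z$, so all of $\Sigma_V$ fixes $z$ too.

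I would then split on the type of $z$. If $z$ is nonsingular, then $V_\phi$ is nonsingular (its radical $\la z\ra$ is anisotropic), so $\Sigma_V=\isom(V_\phi)\cong\Sp(V/V^\perp)\cong\SL(2,\F)$ by the theorem of Taylor quoted in Section~\ref{sec:symmetries}. Under the faithful action on $V/\la z\ra\cong L$, the subgroup $\Sigma_L$ is exactly the normalizer of a nonsplit torus, and Dickson's classification of the subgroups of $\SL(2,2^k)$ shows that such a $D_{2(q+1)}$ is maximal precisely once $q\geq4$; this is the hypothesis, and it is sharp since for $q=2$ one has $\Sigma_L=\Sigma_V$.

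If instead $z$ is singular, then $\Sigma_V$ is a proper subgroup of the isometry group of~(\ref{eq:sing-odd}) (the torus factor $\F^\times$ is not generated by symmetries), and I would compute its structure directly. Writing a nonsingular vector as $x=\ell+cz$ with $\ell\in L$, one finds $\phi(x)=\phi(\ell)$ and a factorization $\sigma_x=\sigma_\ell\tau$, where $\tau$ lies in the elementary abelian group $O_2\cong V/V^\perp$ of transvections into $\la z\ra$. Products $\sigma_\ell\sigma_x$ then realize every element of $O_2$, so $O_2\leq\Sigma_V$; and since $\Sigma_L$ meets $O_2$ trivially and surjects onto $\isom(V/V^\perp)$, we get $\Sigma_V=O_2\rtimes\Sigma_L$ with $\Sigma_L$ a complement.

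To conclude I would invoke the general fact that a complement to a minimal normal subgroup is maximal; thus it remains to show that $O_2\cong V/V^\perp$ is an irreducible $\Sigma_L$-module over $\mathbb{F}_2$ (not merely over $\F$). Identifying $L$ with $\mathbb{F}_{q^2}$ carrying the norm form, the index-$2$ cyclic subgroup $C_{q+1}\leq\Sigma_L$ acts by multiplication by the $(q+1)$-st roots of unity; a generator $\lambda$ has order $2^k+1$, and the multiplicative order of $2$ modulo $2^k+1$ is exactly $2k$, so $\mathbb{F}_2(\lambda)=\mathbb{F}_{2^{2k}}=\mathbb{F}_{q^2}$ and the module is irreducible. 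The main obstacle is this singular case: unlike the nonsingular case, where everything follows from the known maximal-subgroup list for $\SL(2,2^k)$, here one must both pin down $\Sigma_V$ by hand and verify $\mathbb{F}_2$-irreducibility, the latter resting on the elementary statement that a primitive $(2^k+1)$-st root of unity generates $\mathbb{F}_{2^{2k}}$ over the prime field.
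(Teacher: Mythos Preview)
Your argument is correct and follows essentially the same route as the paper's: split on whether the radical point $z$ is nonsingular (where both you and the paper quote the Dickson list for $\SL(2,\F)$) or singular (where both establish $\Sigma_V=Q\rtimes\Sigma_L$ with $Q=O_2$ the natural $2$-dimensional $\Orth^-(L)$-module, and then use irreducibility of that module to deduce maximality of the complement). The one noteworthy difference is that you take the trouble to verify $\mathbb{F}_2$-irreducibility of $Q$ via the observation that a primitive $(q{+}1)$-st root of unity generates $\mathbb{F}_{q^2}$ over $\mathbb{F}_2$; the paper simply asserts that $Q$ is ``the natural module'' and leaves this point implicit, so your treatment of the singular case is actually a bit more careful.
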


\begin{proof}
If $V^{\perp}$ is nonsingular, then
$\Sigma_V=\isom(V_{\phi})\cong\Sp(V/V^{\perp})\cong\SL(2,\F)$ and
$\Sigma_L=\Orth^-(L)\cong D_{2(q+1)}$. It is well known that all
$D_{2(q+1)}$ subgroups of  $\SL(2,\F)$ are maximal when $q\geq 4$.
If $V^{\perp}$ is singular, we have
$\isom(V_{\phi})\cong Q\rtimes(\F^{\times}\times\Orth^-(L))$, where $Q=O_2(\isom(V_{\phi}))$. Thus,
as in the ``$n$ even, $z$ singular" case in the proof of 
Proposition~\ref{prop:intersection}, we have $\Sigma_V\cong Q\rtimes\Orth^-(L)\cong Q\rtimes\Sigma_L$.
In particular, $\Sigma_L$ is its own normalizer in $\Sigma_V$ and the regularity of $Q$ on its
set of complements in $\Sigma_V$ ensures that $\la g,\Sigma_L\ra=\la u, \Sigma_L\ra$  for any
$g\in\Sigma_V\setminus \Sigma_L$. Again, $Q$ is the natural
module for $\Sigma_L\cong\Orth^-(L)$ under the conjugation action, so $\la u,\Sigma_L\ra=Q\rtimes \Sigma_L=\Sigma_V$.
\end{proof}

\begin{remark}
\label{rem:no2}
{\rm
If $q=2$, $V^{\perp}$ is nonsingular, and $L$ is asingular, then $\Sigma_L\cong D_6\cong \SL(2,\Bbb{F}_2)\cong \Sigma_V$;
the condition $q\geq 4$ is needed. 
For $q\geq 8$, the result also holds for a hyperbolic line $L$ by the same argument;
for $q=4$, however, there exist elements $u\in Q$ such that $\la u,\Sigma_L\ra$ 
is a proper intermediate subgroup of $\Sigma_L$ and $\Sigma_V$.
}
\end{remark}

\section{string groups generated by symmetries}
\label{sec:string}
Let $V_{\phi}$ be an orthogonal space of dimension $d$ over the field $\F$, where $q=2^k$. Suppose that
$(G;\{\sigma_{v_1},\ldots,\sigma_{v_d}\})$ is a string subgroup of $\Orth(V_{\phi})$
generated by $d$ symmetries.
Considering commutator relations among symmetries, we see that $(v_i,v_j)=0$ if, and only if,
$|i-j|\neq 1$. As the $v_i$ are nonsingular vectors and our field has characteristic 2, we can
normalize the $v_i$ so that $\phi(v_i)=1$. Thus, relative to the basis $v_1,\ldots,v_d$,
the quadratic form $\phi$ has matrix
\begin{equation}
\label{eq:quad-form}
\Phi(\alpha_1,\ldots,\alpha_{d-1})~=~\left[ \begin{array}{ccccl}
                          1 & \alpha_1 &  &  &    \\
                           &   1  & \alpha_2 &  &   \\
                          &  & \ddots & \ddots &   \\
                            &  &  & 1 & \alpha_{d-1} \\
                             &  &  &  & ~~1
                 \end{array} \right],
\end{equation}
for some $\alpha_1,\ldots,\alpha_{d-1}\in\F$, where 
all other entries are 0.
The bilinear form $(\,,\,)$ associated to $\phi$ is represented
by $B=\Phi+\Phi^{{\rm tr}}$. If in addition $G$ has no direct product decomposition, then
the scalars $\alpha_i$ belong to $\F^{\times}$. 

Next, consider the matrices representing 
the symmetries $\sigma_{v_i}$ for $i=1,\ldots,d$
relative to the same basis. First, $\sigma_{v_1}$ maps $v_2\mapsto v_2+\alpha_1v_1$
and fixes the remaining basis vectors. It therefore  has matrix 
$\left[ \begin{smallmatrix} 1 & \cdot \\ \alpha_1 & 1 \end{smallmatrix} \right]
\oplus I_{d-2}$. Similarly, $\sigma_d$ has matrix 
$I_{d-2}\oplus \left[ \begin{smallmatrix} 1 & \alpha_{d-1} \\ \cdot & 1 \end{smallmatrix} \right]$.
For $2\leq i\leq d-1$,  $\sigma_i$ is represented by a matrix whose restriction to 
$\la v_{i-1},v_i,v_{i+1} \ra$ has the form
\[
\left[ \begin{array}{ccc} 1 & \alpha_{i-1} & \cdot \\ \cdot & 1 & \cdot \\ \cdot & \alpha_i & 1 \end{array} \right],
\]
inducing the identity on the span of the remaining basis vectors. 

Conversely, any choice
of scalars $\alpha_1,\ldots,\alpha_{d-1}\in\Fk^{\times}$ determines both a quadratic form
$\phi$ on $V=\F^{\,d}$ represented by the matrix $\Phi(\alpha_1,\ldots,\alpha_{d-1})$, and a string group
$(G;\{\rho_{1},\ldots,\rho_{d}\})$, where $\rho_i$ is the symmetry determined by the $i^{{\rm th}}$
(nonsingular) standard basis vector of $V$. The properties of both $V_\phi$ and the string group
$(G;\{\rho_{1},\ldots,\rho_{d}\})$ are determined by the scalars $\alpha_i$. 

In order to prove Theorem~\ref{thm:main-orth} we shall need to demonstrate that the scalars
$\alpha_1,\ldots,\alpha_{d-1}$ may be selected so that
$(G;\{\rho_{1},\ldots,\rho_{d}\})$ also satisfies the intersection property 
needed to make it a string C-group. We take up this issue in the next section.
To prove Corollary~\ref{coro:main-sp}, we must also understand the radical of an orthogonal space 
$V_{\phi}$ having matrix $\Phi(\alpha_1,\ldots,\alpha_{d-1})$ and, for nondegenerate $V_{\phi}$
in even dimension, how the choice of scalars affects the Witt index of the space.
The following characterization of the radical of $V_{\phi}$ solves the first of these problems.  

\begin{lem}
\label{lem:nondeg}
If $V=\F^{\,d}$,
$\alpha_1,\ldots,\alpha_{d-1}$ are any elements of $\Fk^{\times}$, and $\phi$ is the quadratic form
represented by $\Phi=\Phi(\alpha_1,\ldots,\alpha_{d-1})$, then $V_{\phi}$ is an orthogonal space.
In particular, if $d$ is even then $V_{\phi}$ is nondegenerate, and if $d=2m+1$ is odd then $V_{\phi}$
has 1-dimensional radical spanned by 
\begin{align}
\label{eq:radical}
z=( 1,0,\beta_1,0,\ldots,0,\beta_m), & & where~
\beta_s=\prod_{i=1}^s\left( \frac{\alpha_{2i-1}}{\alpha_{2i}} \right)~\mbox{for}~s=1,\ldots,m.
\end{align}
Furthermore, the radical is singular precisely when $1+\beta_1^{\,2}+\ldots+\beta_m^{\,2}=0$.
\end{lem}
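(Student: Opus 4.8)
The plan is to reduce everything to a direct computation with the bilinear form matrix $B=\Phi+\Phi^{\tr}$. Since $\phi(v_i)=1$ for each $i$ and we are in characteristic $2$, the diagonal of $B$ vanishes, and $B$ is the tridiagonal alternating matrix
\[
B=\begin{bmatrix} 0 & \alpha_1 & & & \\ \alpha_1 & 0 & \alpha_2 & & \\ & \alpha_2 & 0 & \ddots & \\ & & \ddots & \ddots & \alpha_{d-1} \\ & & & \alpha_{d-1} & 0 \end{bmatrix}.
\]
Because $B$ is symmetric, the radical $V^{\perp}=\rad(V)$ is exactly $\{v\in V\colon Bv^{\tr}=0\}$, so the first task is to solve this linear system and the second is to evaluate $\phi$ on the solution space.

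First I would read off the kernel equations. The top and bottom rows give $\alpha_1 v_2=0$ and $\alpha_{d-1}v_{d-1}=0$, whence $v_2=v_{d-1}=0$ since the $\alpha_i$ are nonzero; each interior row $i$ contributes $\alpha_{i-1}v_{i-1}+\alpha_i v_{i+1}=0$, that is, $v_{i+1}=(\alpha_{i-1}/\alpha_i)v_{i-1}$ (signs being irrelevant in characteristic $2$). The crucial structural point is that the nonzero superdiagonal couples coordinates of \emph{opposite} parity, so the recursion decouples: starting from $v_2=0$ it forces every even coordinate to vanish, while starting from $v_1$ it propagates through the odd coordinates, yielding $v_{2s+1}=\beta_s v_1$ with $\beta_s=\prod_{i=1}^{s}(\alpha_{2i-1}/\alpha_{2i})$ and $\beta_0=1$ the empty product.

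Next I would split on the parity of $d$. If $d=2m$, the bottom-row relation $v_{d-1}=v_{2m-1}=0$ reads $\beta_{m-1}v_1=0$; as $\beta_{m-1}\neq 0$ this forces $v_1=0$, hence $v=0$ and $V_{\phi}$ is nondegenerate. If $d=2m+1$, the bottom-row relation only repeats $v_{2m}=0$ and imposes no constraint on $v_1$, so the solution space is precisely the line spanned by $z=(1,0,\beta_1,0,\ldots,0,\beta_m)$, which is \eqref{eq:radical}. In particular $\dim V^{\perp}\leq 1$ in both parities, so $V_{\phi}$ is an orthogonal space.

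Finally, to decide when this radical is singular I would evaluate $\phi(z)=z\Phi z^{\tr}=\sum_i z_i^{\,2}+\sum_i \alpha_i z_i z_{i+1}$. Every cross term $\alpha_i z_i z_{i+1}$ pairs an odd coordinate with an adjacent even coordinate, and the latter is zero, so the entire cross sum vanishes; what remains is $z_1^{\,2}+z_3^{\,2}+\cdots+z_{2m+1}^{\,2}=1+\beta_1^{\,2}+\cdots+\beta_m^{\,2}$, using $\beta_0=1$. Hence $\la z\ra$ is singular exactly when $1+\beta_1^{\,2}+\cdots+\beta_m^{\,2}=0$, as claimed. I do not anticipate a genuine obstacle: the argument is entirely computational, and the only point demanding care is the boundary bookkeeping—correctly handling the first and last rows of $B$—which is precisely what separates the even case, where the radical collapses, from the odd case, where it survives.
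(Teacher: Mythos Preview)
Your proof is correct and follows exactly the approach the paper indicates: the paper's own proof simply says ``this is a direct calculation'' with $B=\Phi+\Phi^{\tr}$ and $\phi(z)=z\Phi z^{\tr}$, and you have carried out that calculation in full detail. One minor remark on exposition: your sentence ``the nonzero superdiagonal couples coordinates of \emph{opposite} parity, so the recursion decouples'' is slightly confusingly phrased---in the kernel equation from row $i$ it is $v_{i-1}$ and $v_{i+1}$ (same parity) that are linked---but your subsequent deductions make clear you have the right picture.
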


\begin{proof}
This is a direct calculation. Recall that the matrix $B=\Phi+\Phi^{{\rm tr}}$ represents the symmetric form 
determined by $\Phi$, and $\la z\ra$  is its nullspace. Thus, whether $V^{\perp}$ is 
singular or nonsingular is determined by whether $\phi(z)=z\Phi z^{{\rm tr}}$ is zero or nonzero, respectively;
this gives rise to the condition in the lemma.
\end{proof}

Suppose that $d$ is even, and let $\Phi=\Phi(\alpha_1,\ldots,\alpha_{d-1})$ for some $\alpha_i\in\F^{\times}$.
Then, by Lemma~\ref{lem:nondeg}, the orthogonal space $V_{\phi}$ is nondegenerate. We now determine
the Witt index of $V_{\phi}$. Let $e_1,f_1,\ldots,e_m,f_m$ be any hyperbolic basis of the symplectic space
$V$ associated with $V_{\phi}$. Let $N$ denote the additive subgroup of $\F^+$ consisting of all elements $\alpha^2+\alpha$,
where $\alpha\in\F$. The {\em Arf invariant} of $V_{\phi}$ is
\begin{align}
\label{eq:Arf}
{\rm Arf}(V_{\phi}) &= \sum_{i=1}^m\phi(e_i)\phi(f_i)~~~{\rm mod}~N,
\end{align}
an element of $\F^+/N\cong \Bbb{Z}_2$. The Witt index of $V_{\phi}$ is $m$ or $m-1$ according as
${\rm Arf}(V_{\phi})$ is 0 or 1 mod $N$, respectively~\cite{Arf}. Thus, we next compute a hyperbolic basis for $(\,,\,)$ by
applying the Gram-Schmidt process to $B=\Phi+\Phi^{{\rm tr}}$.

For successive $i=0,\ldots,1+(d-4)/2$, add $\alpha_{2i+2}/\alpha_{2i+1}$ times row-column $2i+1$ to row-column $2i+3$. 
This process reduces $B$ to the matrix 
\[
\left[ \begin{array}{cc} 0 & \alpha_1 \\ \alpha_1 & 0 \end{array} \right]
\oplus
\left[ \begin{array}{cc} 0 & \alpha_3 \\ \alpha_3 & 0 \end{array} \right]
\oplus \ldots \oplus
\left[ \begin{array}{cc} 0 & \alpha_{d-1} \\ \alpha_{d-1} & 0 \end{array} \right]
,
\]
which is almost a hyperbolic basis. Indeed, scaling alternate vectors in the 
resulting basis, we obtain the desired hyperbolic basis
$e_1,f_1,\ldots,e_m,f_m$, where $e_1=v_1$, 
\begin{align}
\label{eq:ei}
e_i &= v_{2i-1}+\sum_{j=1}^{i-1} \left( \prod_{\ell=1}^j\frac{\alpha_{2i-2\ell}}{\alpha_{2i-2\ell-1}}\right)v_{2i-2j-1} & \mbox{for}\;2\leq i\leq m,
\end{align}
and $f_i=v_{2i}/\alpha_{2i-1}$ for $1\leq i\leq m$. Although this is a rather unwieldy formula, we can
simplify it greatly through judicious choices of scalars. We shall return to this when we prove Corollary~\ref{coro:main-sp}
in the following section.

\section{proofs of the main results}
\label{sec:proof}
We saw in the previous section that any selection of scalars $\alpha_1,\ldots,\alpha_{d-1}$ from $\F^{\times}$ defines
a quadratic form $\phi$ represented on the row space $V=\F^d$ by $\Phi(\alpha_1,\ldots,\alpha_{d-1})$, turning
$V_{\phi}$ into an orthogonal space. If $v_1,\ldots,v_d$ denotes the standard basis of $V$, we also saw
that $G=\la \sigma_{v_1},\ldots,\sigma_{v_d}\ra$ is a  string group. We now show that
the scalars $\alpha_i$ may be chosen so that $G=\Orth(V_{\phi})$ and $(G;\{ \sigma_{v_1},\ldots,\sigma_{v_d}\})$
is a string C-group, thereby proving Theorem~\ref{thm:main-orth}.
\medskip

Fix $1\leq i\leq d-1$, and consider the dihedral group $\la \sigma_{v_i},\sigma_{v_{i+1}} \ra$, which
acts on its 2-dimensional support $\la v_i,v_{i+1}\ra$. We select the scalar $\alpha_i$
so that two conditions are satisfied: first that $\la v_i,v_{i+1}\ra$ is asingular,
and secondly that $\la \sigma_{v_i},\sigma_{v_{i+1}}\ra$ induces the full group 
$\Orth^-(\la v_i,v_{i+1}\ra)$, of order $2(q+1)$, on this 2-space. 
The restriction of $\phi$ to
$\la v_i,v_{i+1}\ra$ is represented by the matrix $\left[ \begin{smallmatrix} 1 & \alpha_i \\ 0 & 1 \end{smallmatrix} \right]$;
the Arf invariant of this  form is $\alpha_i^{-1}\;({\rm mod}\;N)$. Thus, the first condition is satisfied
so long as all  $\alpha_i$ are chosen from the set
\begin{align}
\label{eq:define-A0}
A_0 &= \{ \beta\in\F^*\colon \beta^{-1}\not\in N \},
\end{align}
of size $\frac{q}{2}$.
Next, the restriction of the product
$h_i:=\sigma_{v_i}\sigma_{v_{i+1}}$ has matrix 
\begin{equation}
\label{eq:def-ai}
H_{\alpha_i}:=\left[ \begin{array}{cc} 1 & \alpha_{i} \\ \alpha_i & 1+\alpha_i^2 \end{array} \right]\in\SL(2,\F).
\end{equation} 
For $\alpha_i$ selected from $A_0$, the order of this matrix divides $q+1$. Thus, we wish to select 
our scalars from the set $A=\{\beta\in A_0\colon H_{\beta}\;\mbox{has order}\;q+1\}$. The proportion
of elements of $A_0$ that belong to $A$ is determined by the Euler totient of $q+1$.
\medskip

The following somewhat technical result
is crucial to our proof of Theorem~\ref{thm:main-orth}.

\begin{lem}
\label{lem:generate}
Let $q=2^k\geq 4$ and $V$ a vector space of dimension $d$ over $\F$.
Let $\alpha_1,\ldots,\alpha_{d-1}\in A$, and let 
$\phi$ be the quadratic form represented in a basis $v_1,\ldots,v_d$ of $V$ 
by the matrix $\Phi(\alpha_1,\ldots,\alpha_{d-1})$ in equation~(\ref{eq:quad-form}).
For each $1\leq b\leq d-1$, and each $1\leq i\leq d-b+1$, if 
\begin{align*}
X_{b,i}=\{v_i,\ldots,v_{i+(b-1)}\},~~ \mbox{then} & ~~~~~~
\Sigma_{X_{b,i}}=
\Sigma_{\la X_{b,i} \ra}.
\end{align*}
In words, the group generated by the symmetries determined by consecutive basis vectors always coincides with
the (potentially larger) group associated with the linear span of these vectors.
\end{lem}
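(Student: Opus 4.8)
The plan is to induct on the block length $b$, simultaneously for all admissible start indices $i$. The inclusion $\Sigma_{X_{b,i}}\leq\Sigma_{\la X_{b,i}\ra}$ is immediate, since each $v_j$ is a nonsingular vector of $W:=\la X_{b,i}\ra$; the whole content is the reverse inclusion $\Sigma_W\leq\Sigma_{X_{b,i}}$. For the base cases, $b=1$ is trivial, and $b=2$ is exactly what the scalar choice buys us: because $\alpha_i\in A$, the pair $\sigma_{v_i},\sigma_{v_{i+1}}$ generates the full group $\Orth^-(\la v_i,v_{i+1}\ra)$ of the asingular line $\la v_i,v_{i+1}\ra$, and this equals $\Sigma_{\la v_i,v_{i+1}\ra}$ because an asingular line is nondegenerate.

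The engine of the inductive step is the conjugation identity $\sigma_x^{\,g}=\sigma_{xg}$, valid for every isometry $g$ and nonsingular $x$, together with a normalization: since $\alpha\mapsto\alpha^2$ is a bijection of $\F$, every nonsingular point of $W$ has a \emph{unique} representative $y$ with $\phi(y)=1$, so the symmetries supported in $W$ correspond bijectively to the nonsingular points of $W$, and isometries permute them accordingly. Moreover $\sigma_z=\mathrm{id}$ whenever $z\in\rad(W)$. Consequently, writing $G:=\Sigma_{X_{b,i}}$, to prove $\Sigma_W\leq G$ it suffices to show that $G$ acts transitively on the nonsingular points of $W$ lying outside $\rad(W)$: given such transitivity, every such point $\la y\ra$ is the image of $\la v_i\ra$ under some $g\in G$, whence $\sigma_y=\sigma_{v_i}^{\,g}\in G$, and these symmetries generate $\Sigma_W$.

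To establish the transitivity I would set $W_1:=\la X_{b-1,i}\ra$ (dropping the last vector), so that by the inductive hypothesis $\Sigma_{W_1}=\Sigma_{\la X_{b-1,i}\ra}$ is the full symmetry group of $W_1$, lies in $G$, and $G=\la\Sigma_{W_1},\sigma_{v_{i+b-1}}\ra$. Using the structure theorems for the relevant spaces --- Lemma~\ref{lem:nondeg} to locate $\rad(W_1)$ and $\rad(W)$, Witt's extension theorem in the nondegenerate cases, and the decompositions (\ref{eq:sing-odd})--(\ref{eq:nonsing-odd}) in the degenerate odd-dimensional cases --- one checks that $\Sigma_{W_1}$ is already transitive on the nonsingular points of $W_1$ off $\rad(W_1)$. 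It then remains to move an arbitrary nonsingular $y\in W$ into $W_1$ by an element of $G$, after which one finishes inside $W_1$. Here the tridiagonal shape of the form is decisive: $\sigma_{v_{i+b-1}}$ alters only the top coordinate of $y$, and the order-$(q+1)$ rotation $\sigma_{v_{i+b-2}}\sigma_{v_{i+b-1}}$ arising from $\alpha_{i+b-2}\in A$, used together with $\Sigma_{W_1}$, lets us drive the $v_{i+b-1}$-component to zero. For the low-rank anchor $b=3$ --- where $W$ is a degenerate $3$-space and $W_1$ an asingular line --- this reduction is unnecessary: the maximality of $\Sigma_{W_1}$ in $\Sigma_W$ furnished by Lemma~\ref{lem:plane}, together with $\sigma_{v_{i+2}}\in\Sigma_W\setminus\Sigma_{W_1}$, gives $G=\la\Sigma_{W_1},\sigma_{v_{i+2}}\ra=\Sigma_W$ outright.

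I expect the main obstacle to be precisely this last step, namely controlling the degenerate subspaces throughout the induction. When $b$ is odd, $W$ itself carries a $1$-dimensional radical while $W_1$ is nondegenerate, and when $b$ is even the roles reverse; in either case the symmetries fix the radical pointwise, so the naive statement ``transitive on all nonsingular points'' is false and must be restricted to points off the radical. The delicate part is carrying out the coordinate reduction cleanly so that a $G$-translate of $\la y\ra$ genuinely lands in $W_1$ away from $\rad(W_1)$, rather than by brute force; this is exactly where the choice of scalars from $A$, guaranteeing a full $\Orth^-$ with an order-$(q+1)$ rotation on each consecutive asingular line, does the essential work.
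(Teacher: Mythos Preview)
Your inductive framework and your treatment of the cases $b\leq 3$ match the paper's proof closely. The divergence---and the gap---is in the step $b\geq 4$.

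The claim ``$\sigma_z=\mathrm{id}$ whenever $z\in\rad(W)$'' is false as stated. The symmetry $\sigma_z$ is an element of $\isom(V_\phi)$, not of $\isom(W_\phi)$; it restricts to the identity on $W$ (because $(w,z)=0$ for all $w\in W$), but it moves any $v\in V$ with $(v,z)\neq 0$, and such $v$ exist whenever $z\notin\rad(V)$. Thus when $b$ is odd and $\rad(W)=\la z\ra$ is nonsingular, $\sigma_z$ is a genuinely nontrivial element of $\Sigma_W$ that your transitivity argument does not reach. It is in fact true that the off-radical symmetries already generate $\Sigma_W$---ultimately because $\isom(W_\phi)\cong\Sp(b-1,q)$ is perfect for $q\geq 4$, so there is no homomorphism to $\Bbb{F}_2$ sending every $\bar\sigma_y$ to $1$---but that requires a real argument, not the one you gave. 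Secondly, the step ``drive the $v_{i+b-1}$-component to zero'' is not justified: the element $\sigma_{v_{i+b-2}}\sigma_{v_{i+b-1}}$ also moves $v_{i+b-3}$, so its action on a general $y$ is not a rotation in the last two coordinates alone, and neither $\Sigma_{W_1}$ (which fixes the $v_{i+b-1}$-coefficient) nor $\sigma_{v_{i+b-1}}$ by itself accomplishes the reduction in any evident way.

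The paper sidesteps both issues by a different, cleaner mechanism. Rather than arguing via transitivity, it takes an arbitrary nonsingular $y\in\la X_{b,i}\ra$ and forms the plane $U=\la v_i,v_{i+1},y\ra$. This plane meets $\la X_{b-1,i+1}\ra$ (the paper drops the \emph{first} basis vector, not the last) in a line $M$ through $\la v_{i+1}\ra$; since $M$ is not totally singular and $q\geq 4$, it contains a nonsingular point $\la x\ra\neq\la v_{i+1}\ra$, and $\sigma_x\in\Sigma_{\la X_{b-1,i+1}\ra}\leq G$ by induction. Now the maximality in Lemma~\ref{lem:plane}, applied to the plane $U$ with asingular line $L=\la v_i,v_{i+1}\ra$, gives $\Sigma_U=\la\Sigma_L,\sigma_x\ra\leq G$, whence $\sigma_y\in G$. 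This handles every nonsingular $y$---including $y=z$ when the radical is nonsingular---uniformly, with no appeal to transitivity, Witt's theorem, or coordinate reduction.
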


\begin{proof}
We proceed by induction on $b$, with the case $b=1$ being obvious.  
\smallskip

When $b=2$, we are given nonsingular  $v_i,v_{i+1}$
such that $\la v_i,v_{i+1}\ra$ is asingular and $h := \sigma_{v_i}\sigma_{v_{i+1}}$ has order $q+1$, and 
we wish to show that $\Sigma_{\{v_i,v_{i+1}\}}=\Sigma_{\la v_i,v_{i+1}\ra}$.
Note, $\la h\ra$ acts regularly on the set of points of $\la v_i,v_{i+1}\ra$.
Thus, for each nonzero vector $x$ in $\la v_i,v_{i+1}\ra$, there exists $0\leq j<q+1$ such that $\la x\ra=\la v_i\ra h^j$. 
Hence, $\sigma_x=\sigma_{v_i}^{h^j}\in
\Sigma_{\{v_i,v_{i+1}\}}$, as required.
\medskip

The case $b=3$ follows from $b=2$ together with Lemma~\ref{lem:plane}.
For, if $L=\la v_i,v_{i+1}\ra$, then $\Sigma_L=\Sigma_{X_{2,i}}\leq\Sigma_{X_{3,i}}$
by the $b=2$ case. 
By Lemma~\ref{lem:plane}, $\Sigma_L$ is maximal in $\Sigma_{\la X_{3,i}\ra}$. 
As $\sigma_{v_{i+2}}\not\in\Sigma_L$, so $\Sigma_{\la X_{3,i}\ra}=\Sigma_{L\cup\{v_{i+2}\}}\leq
\Sigma_{X_{3,i}}$. (Note, this argument applies to the $\Bbb{F}_2$ case as well, since $\phi$ has a singular
radical.)
\smallskip

We can use Lemma~\ref{lem:plane} to treat the inductive $b>2$ case uniformly
as follows.
Consider $y\in \la X_{b,i}\ra$ nonsingular;
we must show that $\sigma_y\in \Sigma_{X_{b,i}}$. Let $W= \la X_{b-1,i+1}\ra$.
By induction, $\Sigma_W=\Sigma_{X_{b-1,i+1}}\leq\Sigma_{X_{b,i}}$, so
 we may assume that $\la y\ra$ lies off $W$.
Similarly, if  $L=\la v_i,v_{i+1}\ra$, then $\Sigma_L=\Sigma_{X_{2,i}}\leq \Sigma_{X_{b,i}}$ by the $b=2$ case,
so we may further assume that $\la y\ra $ lies off $L$.
Thus, the plane $U=\la v_i,v_{i+1},y\ra$ meets $W$ in a line, $M$,
containing $\la v_{i+1}\ra$. 
As $M$ is not totally singular and contains $q\geq 4$ points not equal to $\la v_{i+1}\ra$,
at least one of those, $x$, is nonsingular. 
Evidently, $\sigma_x\in\Sigma_W\in\Sigma_{X_{b,i}}$. 
Now, by the argument in the foregoing paragraph,
it follows that $\Sigma_U=\Sigma_{L\cup\{x\}} \leq  \Sigma_{X_{b,i}}$.
In particular $\sigma_y\in
\Sigma_{X_{b,i}}$, as claimed.
\end{proof}

\begin{remark}
\label{rem:3}
{\rm We might equally have chosen the $\alpha_i$ so that the matrices in~(\ref{eq:def-ai}) have order $q-1$
or any combination of $q-1$ and $q+1$. The crucial property is that $\la \sigma_{v_i},\sigma_{v_{i+1}}\ra$
induces the full group of isometries on its support $\la v_i,v_{i+1}\ra$. There is one exception: when $q=4$, 
strings $\sigma_{v_{i}},\ldots,\sigma_{v_{i+t}}$ such that $\sigma_{v_j}\sigma_{v_{j+1}}$ has order $q-1=3$
generate groups $S_{t+1}$ rather than $\Orth(\la v_i,\ldots,v_{i+t}\ra)$; cf.~Remark~\ref{rem:no2}. 
We insisted on using elements of order
 $q+1$ to avoid dealing with this exception.
}
\end{remark}
\medskip

We shall need to check that certain sequences of
involutions satisfy the intersection property~(\ref{eq:IP}). This would be quite tedious
if one had to check all possible intersections, but the following result makes the task tractable.

\begin{prop}{\cite[Proposition 2E16]{MS2002}}
\label{prop:IP}
Let $G$ be a group generated by involutions $\rho_1,\ldots,\rho_{n}$. If both 
 $\la \rho_1,\ldots,\rho_{n-1}\ra$ and $\la \rho_2,\ldots,\rho_n\ra$ are string 
 C-groups on their defining generators, and 
\[
\la \rho_1,\ldots,\rho_{n-1}\ra \cap
\la \rho_2,\ldots,\rho_n\ra =
\la \rho_2,\ldots,\rho_{n-1}\ra, 
\] 
then $\rho_1,\ldots,\rho_n$ satisfies the
intersection property. In particular, $(G;\{\rho_1,\ldots,\rho_n\})$ is a string C-group.
\end{prop}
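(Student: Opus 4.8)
The plan is to prove Proposition~\ref{prop:IP} by induction on $n$, reducing the full intersection property~(\ref{eq:IP}) for all pairs of subsets $I,J$ to the single displayed hypothesis about the two ``end'' subgroups. The key organizing idea is that the abstract intersection condition for a string group follows from a local condition: it suffices to understand how the two maximal ``initial'' and ``terminal'' subgroups $\la \rho_1,\ldots,\rho_{n-1}\ra$ and $\la \rho_2,\ldots,\rho_n\ra$ meet. Since this is quoted as~\cite[Proposition 2E16]{MS2002}, I would follow the standard argument from the theory of string C-groups, but here is how I would reconstruct it.

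First I would set up notation: for $I\subseteq\{1,\ldots,n\}$ write $G_I:=\la \rho_i\colon i\in I\ra$, so that~(\ref{eq:IP}) asserts $G_I\cap G_J=G_{I\cap J}$ for all $I,J$. The inclusion $G_{I\cap J}\subseteq G_I\cap G_J$ is trivial, so the content is always the reverse inclusion. By the inductive hypothesis, both $G_{\{1,\ldots,n-1\}}$ and $G_{\{2,\ldots,n\}}$ are string C-groups on their generators, hence each already satisfies~(\ref{eq:IP}) for all subsets of $\{1,\ldots,n-1\}$ and of $\{2,\ldots,n\}$ respectively. The work is therefore to promote this to arbitrary $I,J\subseteq\{1,\ldots,n\}$, the only genuinely new subsets being those containing both $1$ and $n$.

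The main step I would carry out is a reduction argument exploiting the string (commuting) structure~(\ref{eq:string}). If neither $I$ nor $J$ equals the full set, then each is contained in $\{1,\ldots,n-1\}$ or $\{2,\ldots,n\}$, and one invokes the inductive intersection property inside whichever of the two end subgroups contains both. The remaining case is when one set, say $I$, is all of $\{1,\ldots,n\}$; then $G_I\cap G_J=G_J=G_{I\cap J}$ trivially, so the difficulty concentrates on intersecting two proper subsets that jointly involve both endpoints $1$ and $n$. Here I would use the standard trick: write $I=I'\cup\{n\}$ and $J=J'\cup\{1\}$ type decompositions and slide elements using the commuting relations $\rho_i\rho_j=\rho_j\rho_i$ for $|i-j|>1$, to reduce any such intersection to one that fits inside a single end subgroup, where the displayed hypothesis $G_{\{1,\ldots,n-1\}}\cap G_{\{2,\ldots,n\}}=G_{\{2,\ldots,n-1\}}$ can be applied as the base of the gluing.

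The hard part will be handling the case where $I$ and $J$ are such that $\{1,n\}\subseteq I\cup J$ but neither index pair can be eliminated by commutation alone; controlling the interaction between the two ends is exactly what the single displayed equation is designed to govern, and making the induction close cleanly requires carefully choosing which element to strip off. I would expect the cleanest route is to induct so that at each stage one removes either the generator $\rho_1$ or $\rho_n$, always keeping the two overlapping end subgroups as string C-groups by hypothesis, and invoking the one displayed intersection as the ``amalgamation'' identity that fuses the two inductive conclusions. The final sentence of the proposition---that $(G;\{\rho_1,\ldots,\rho_n\})$ is then a string C-group---is immediate once~(\ref{eq:IP}) is verified, since the string condition~(\ref{eq:string}) is already assumed among the generators.
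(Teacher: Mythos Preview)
The paper does not prove Proposition~\ref{prop:IP}; it is simply quoted from~\cite[Proposition~2E16]{MS2002} and used as a black box in the proofs of Theorem~\ref{thm:main-orth} and Theorem~\ref{thm:low}. So there is no ``paper's own proof'' to compare against.

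Your reconstruction is broadly correct and follows the standard argument from~\cite{MS2002}, but your case analysis is slightly off. You write that if neither $I$ nor $J$ is the full set then ``each is contained in $\{1,\ldots,n-1\}$ or $\{2,\ldots,n\}$, and one invokes the inductive intersection property inside whichever of the two end subgroups contains both.'' This overlooks the possibility that $I\subseteq\{1,\ldots,n-1\}$ and $J\subseteq\{2,\ldots,n\}$ lie in \emph{different} end subgroups; here one must first use the displayed hypothesis to get $G_I\cap G_J\subseteq G_{\{2,\ldots,n-1\}}$, and then apply the intersection property inside each end subgroup in turn. More importantly, the genuinely new case is not ``$I=\{1,\ldots,n\}$'' (which is trivial, as you note) but rather a proper $I$ containing both $1$ and $n$, say with some $k\notin I$, $1<k<n$. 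Here one needs $[\rho_i,\rho_j]=1$ for $i<k<j$ to factor $G_I=G_{I\cap\{1,\ldots,k-1\}}\cdot G_{I\cap\{k+1,\ldots,n\}}$ and reduce to subsets lying in a single end subgroup. You gesture at this (``slide elements using the commuting relations''), but note that the commutation $[\rho_1,\rho_n]=1$ is not forced by the hypotheses as literally stated in the paper; in~\cite{MS2002} the ambient group is assumed to be an sggi, i.e.\ the string relation~(\ref{eq:string}) is part of the setup.
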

\medskip

\noindent {\bf Proof of Theorem~\ref{thm:main-orth}.}~
Let $q=2^k$ for $k\geq 2$, and $V=\F^d$ for $d\geq 2$.
To begin with, choose any scalars $\alpha_1,\ldots,\alpha_{d-1}\in A$.
Let $\phi$ be the quadratic form on $V$ represented relative to a basis $v_1,\ldots,v_d$
by the matrix $\Phi(\alpha_1,\ldots,\alpha_{d-1})$ in equation~(\ref{eq:quad-form}).
We show that $(\Sigma_V;\{\sigma_{v_1},\ldots,\sigma_{v_d}\})$ is a string C-group.
The sequence $\sigma_{v_1},\ldots,\sigma_{v_d}$ 
satisfies the necessary string condition by construction and they generate 
$\Sigma_V$ by Lemma~\ref{lem:generate}. It
remains only to establish the intersection property. 

We proceed by induction on $d$, following the notation of Lemma~\ref{lem:generate}.
Recall, $\la \sigma_{v_1},\sigma_{v_2}\ra$ is the dihedral group $D_{2(q+1)}\cong\Orth^-(2,\F)$
so the result is clear for the base case $d=2$.
Let $d>2$ and suppose
that $\Sigma_{X_{b,i}}$ (with its defining generators) satisfies the intersection property whenever $b<d$.
In particular, both $\Sigma_{X_{d-1,1}}$ and $\Sigma_{X_{d-1,2}}$ satisfy the intersection property.
Note, $\la X_{d-1,1}\ra\cap\la X_{d-1,2}\ra=\la X_{d-2,2}\ra$, so it follows from 
Lemma~\ref{lem:generate} and Proposition~\ref{prop:intersection} that
\[
\Sigma_{X_{d-1,1}}\cap\Sigma_{X_{d-1,2}}=\Sigma_{\la X_{d-1,1}\ra}\cap\Sigma_{\la X_{d-1,2}\ra}
=\Sigma_{\la X_{d-2,2} \ra}=\Sigma_{X_{d-2,2}}.
\]
The intersection property now follows from Proposition~\ref{prop:IP}. 

When $q=2$, $A=\{1\}$ so there is only one choice for each of the scalars $\alpha_i$.
Here, as in~Remark~\ref{rem:3},
each $\sigma_{v_i}\sigma_{v_{i+1}}$ has order 3, so $\la\sigma_{v_1},\ldots,\sigma_{v_d}\ra\cong S_d$. 
Thus, it is not possible to generate $\Orth(d,\Bbb{F}_2)$ as a string C-group using symmetries.
\hfill $\Box$
\bigskip

\noindent {\bf Proof of Corollary~\ref{coro:main-sp}.}~ First, let $d=2m$. We must show that,
given either possible isometry type $\epsilon\in\{+,-\}$ of a quadratic space $V=\F^d$,
the scalars $\alpha_1,\ldots,\alpha_{d-1}\in A$ may be chosen to endow $V$
with a form $\Phi(\alpha_1,\ldots,\alpha_{d-1})$ of type $\epsilon$. To this end, we employ
the Arf invariant described in the preceding section.

For $\lambda,\mu\in A$, put $\alpha_1=\alpha_{d-1}=\lambda$ and $\alpha_i=\mu$ for $1<i<d-1$.
Using equations~(\ref{eq:Arf}) and~(\ref{eq:ei}), we compute
\begin{equation}
\label{eq:prescribed-arf}
{\rm Arf}(V_{\phi}) = \frac{m(m-1)}{2}\cdot \mu^{-1}+(m-1)\cdot \lambda^{-1}+\frac{\mu}{\lambda^2}~~~~({\rm mod}\;N).
\end{equation}
If $\mu=\lambda$, then~(\ref{eq:prescribed-arf}) reduces to ${\rm Arf}(V_{\phi})=\frac{m(m+1)}{2}\cdot \lambda^{-1}\;({\rm mod}\;N)$.
As $\lambda\in A$, so $\lambda^{-1}\not\in N$; hence, in this case ${\rm Arf}(V_{\phi})=0$ if, and only if, $m$ is congruent
to 0 or 3 mod 4. If, on the other hand, we choose $\mu$ so that $\frac{\mu}{\lambda^2}\in N$, then
${\rm Arf}(V_{\phi})=0$ if, and only if, $m$ is congruent to 1 or 2 mod 4. Hence, whatever the value of $m$, 
we can construct a suitable quadratic form on $\Fk^{2m}$ of either isometry type. In particular,
$\Orth^{\pm}(2m,\Fk)$ is always a string C-group of rank $2m$, as claimed.
\smallskip

Next, take $d=2m+1$ to be odd. Referring to Lemma~\ref{lem:nondeg}, 
the 1-dimensional radical $V^{\perp}$ is nonsingular
precisely when $1+\beta_1^{\,2}+\ldots+\beta_m^{\,2}\neq 0$.
Observe that the value of $\beta_m^{\,2}$ is changed if one fixes all $\alpha_i$ for $1\leq i<d-1$ 
and makes a different choice for $\alpha_{2m}=\alpha_{d-1}$. Hence, we can always arrange
for $V^{\perp}$ to be nonsingular. Having done so, we have 
$\Sp(2m,\F)\cong\Sp(V/V^{\perp})\cong\isom(V)$, and the result follows. \hfill $\Box$

\section{low-dimensional classical groups acting on polytopes}
\label{sec:conclusion}
There has been a significant effort in recent years to determine the pairs $(G,r)$ such that the finite simple 
group $G$ is a string C-group of rank $r$. 
The question has been settled completely for $r=3$ by the combined results of Nuzhin for simple groups
of Lie type~\citelist{\cite{Nu1}\cite{Nu2}} and Mazurov for sporadic simple groups~\cite{Maz}.

For higher ranks, the question has also been settled for alternating groups in three separate 
papers involving the third author~\citelist{\cite{FLM}\cite{FLM2}\cite{CFLM}}. For other types of simple
groups the study has generally progressed in a more piecemeal fashion by considering
individual infinite families such as $\PSL(2,\F)$, $\PSL(3,\F)$, and $\PSL(4,\F)$ 
(see~\citelist{\cite{ls07}\cite{BV10}\cite{BL:psl4}}, respectively). The results in this paper,
as well as contributing generally to the ongoing effort to understand abstract regular polytopes
of higher rank, help to settle the string C-group question for low-dimensional simple classical groups
defined over finite fields of characteristic 2. 

\begin{thm}
\label{thm:low}
Let $\F$ be the finite field with $q=2^k$ elements, $V$ an $\F$-vector space of dimension at most 4,
and $G$ a quasisimple classical group whose natural module is $V$. Then the associated simple group
$G/Z(G)$ is a string C-group of rank $r\geq 4$ if, and only if, $r\in \{4,5\}$ and $G=\Sp(4,\F)$.
\end{thm}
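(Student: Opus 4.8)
The plan is to convert the statement into a finite classification problem and then dispose of each case by combining the existing literature, the constructions of this paper, and a descent on maximal subgroups. First I would list the quasisimple classical groups $G$ whose natural module $V$ has dimension at most $4$ over $\F$: up to the relevant isomorphisms these are $\SL(2,q)=\Sp(2,q)$, $\SL(3,q)$, $\mathrm{SU}(3,q)$, $\SL(4,q)$, $\mathrm{SU}(4,q)$, $\Sp(4,q)$, and $\Om^{\pm}(4,q)$ (the $3$-dimensional orthogonal group contributes nothing new, since $\Orth(3,q)\cong\Sp(2,q)$ in characteristic $2$). Passing to simple quotients and using the exceptional isomorphisms $P\Om^{-}(4,q)\cong\PSL(2,q^2)$ and $P\Om^{+}(4,q)\cong\PSL(2,q)\times\PSL(2,q)$ (the latter being non-simple, so that $\Om^{+}(4,q)$ is not quasisimple and drops out), the simple groups to be tested are exactly $\PSL(2,q)$, $\PSL(2,q^2)$, $\PSL(3,q)$, $\mathrm{PSU}(3,q)$, $\PSL(4,q)$, $\mathrm{PSU}(4,q)$, and $\Sp(4,q)=\PSp(4,q)$.

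Next I would quote the results already available. The rank-$3$ classification of Nuzhin and Mazurov~\citelist{\cite{Nu1}\cite{Nu2}\cite{Maz}} records the rank-$3$ behaviour of all of these groups and serves as the base case for the induction to follow. For rank $r\geq 4$, the groups $\PSL(2,q)$ and $\PSL(2,q^2)$ are excluded by~\cite{ls07}, the group $\PSL(3,q)$ by~\cite{BV10}, and $\PSL(4,q)$ by~\cite{BL:psl4} (the characteristic-$2$ value $\PSL(4,2)\cong A_8$ being also covered by the alternating-group results~\cite{FLM}). This leaves the two unitary groups $\mathrm{PSU}(3,q)$, $\mathrm{PSU}(4,q)$ and the symplectic group $\Sp(4,q)$. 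The positive half of the symplectic case is essentially immediate: Corollary~\ref{coro:main-sp} with $m=2$ exhibits $\Sp(4,\F)$ as a string C-group of rank $5$, and for rank $4$ I would supply a separate explicit construction (for instance exploiting the exceptional isomorphism $\Sp(4,q)\cong\Orth(5,q)$ and the graph automorphism available in characteristic $2$); the small value $q=2$, where Corollary~\ref{coro:main-sp} does not apply, is covered by $\Sp(4,2)\cong S_6$ together with the symmetric-group results~\cite{FL, FLM1}.

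The heart of the proof is thus the package of \emph{upper} bounds: that $\Sp(4,q)$ has no string C-group representation of rank $\geq 6$, and that $\mathrm{PSU}(3,q)$ and $\mathrm{PSU}(4,q)$ have none of rank $\geq 4$. My approach here is a descent on maximal subgroups. If $(G;\{\rho_0,\ldots,\rho_{r-1}\})$ is a string C-group, then $\la\rho_0,\ldots,\rho_{r-2}\ra$ and $\la\rho_1,\ldots,\rho_{r-1}\ra$ are string C-groups of rank $r-1$, and by the intersection property each is a \emph{proper} subgroup of $G$ and so lies in a maximal subgroup. Invoking the classification of maximal subgroups of the low-dimensional classical groups, I would verify that for $\mathrm{PSU}(3,q)$ and $\mathrm{PSU}(4,q)$ no maximal subgroup carries a rank-$3$ string C-group positioned so as to extend to rank $4$, and that for $\Sp(4,q)$ none carries one of rank $\geq 5$; the rank-$3$ classification anchors the recursion.

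The step I expect to be the main obstacle is precisely this maximal-subgroup descent, carried out uniformly in $q=2^k$. The difficulty is twofold. On one side, the sporadic maximal subgroups that occur only for particular small $q$ (such as $A_6$, $A_7$, or $\PSL(2,7)$ sitting inside $\mathrm{PSU}(3,q)$ or $\mathrm{PSU}(4,q)$) must each be ruled out by hand. On the other side, the maximal subgroups of the \emph{same} Lie type as $G$---the field, subfield, and imprimitive subgroups---threaten an infinite descent that has to be cut off by pairing the rank constraint with order estimates. Handling these two phenomena simultaneously, rather than deferring to a case-by-case machine search, is where the genuine work of the proof will lie.
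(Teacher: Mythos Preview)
Your overall architecture---reduce to a list of simple quotients, quote the existing literature for the negative results, invoke Corollary~\ref{coro:main-sp} for rank~$5$, and use maximal-subgroup descent for the upper bounds---matches the paper's, but you have misjudged where the weight of the argument lies and missed two shortcuts that the paper exploits.

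First, the unitary cases are much easier than you suggest. For $\mathrm{PSU}(3,q)$, the result of~\cite{BV10} is stated for $\GL(3,\F)$ over an arbitrary finite field: any irreducible string subgroup preserves a symmetric bilinear form and so has simple quotient $\PSL(2,\F)$. Applying this over $\Bbb{F}_{q^2}$ disposes of $\mathrm{SU}(3,q)$ immediately---no descent on maximal subgroups is needed. For $\mathrm{PSU}(4,q)$, the paper observes that $\mathrm{PSU}(4,\Fk)\cong\Omega^{-}(6,\Fk)$, and that the argument in~\cite{BL:psl4} for $\Omega^{+}(6,\Fk)\cong\PSL(4,\Fk)$ (which proceeds via the geometry of involutions in a $6$-dimensional orthogonal space) applies verbatim to the minus-type group. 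So both unitary cases are one-liners, not the ``heart of the proof''. Your proposed maximal-subgroup descent for these groups might work, but it is substantially harder and entirely unnecessary.

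Second, and more seriously, the genuine new content of this theorem is the rank-$4$ construction for $\Sp(4,\Fk)$, which you dismiss in a parenthetical. The paper devotes the bulk of its proof to building an explicit string C-group $(\Sp(V);\{\sigma_u,\rho,\sigma_w,\tau\})$ inside a $4$-dimensional orthogonal space of minus type: two symmetries $\sigma_u,\sigma_w$, a pseudo-transvection $\rho$ (an involution in $\Omega^{-}(V_\phi)$ but not a symmetry), and a symplectic transvection $\tau\not\in\Orth(V_\phi)$. The delicate parts are arranging $\la\sigma_u,\rho,\sigma_w\ra\cong\Orth^{-}(V_\phi)$, using maximality of $\Orth^{-}$ in $\Sp$ to get generation, and then verifying the intersection property by analysing the stabilizer of a certain line under a second quadratic form $\phi'$ preserved by $\la\rho,\sigma_w,\tau\ra$. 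Your suggestion of ``exploiting $\Sp(4,q)\cong\Orth(5,q)$ and the graph automorphism'' is too vague to count as a plan; the $5$-dimensional construction of this paper gives rank~$5$, not~$4$, and it is not at all clear how the graph automorphism would produce four involutions with the intersection property.

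Finally, a minor correction: for $q=2$ the group $\Sp(4,2)\cong S_6$ is not quasisimple, so it falls outside the hypothesis of the theorem and need not be handled at all. The paper notes this and also remarks parenthetically that $\Sp(4,2)'\cong A_6$ is not a string C-group of rank~$\geq 4$; your appeal to the symmetric-group results is misplaced here. The upper bound $r\leq 5$ for $\Sp(4,\Fk)$ you handle correctly: the paper likewise checks (via~\cite[Table~8.14]{BHR}) that no maximal subgroup contains a string C-group of rank~$\geq 5$.
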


\begin{proof}
We first show that no quasisimple classical group $G$ other than $\Sp(4,\F)$ is such that $G/Z(G)$
is a string C-group of rank $r\geq 4$. First, Leemans and Schulte~\cite{ls07} showed that
$\PSL(2,\F)$ is a string C-group of rank $r\geq 4$ if, and only if, $r=4$ and $q\in \{11,19\}$.
Secondly, Brooksbank and Vicinsky~\cite{BV10} showed that if $G$ is an irreducible string 
subgroup of $\GL(3,\F)$, then $G$
preserves a symmetric bilinear form, so $G/Z(G)\cong\PSL(2,\F)$.
Finally, Brooksbank and Leemans~\cite[Corollary 4.5]{BL:psl4} showed that $\PSL(4,\Fk)$ is not a string C-group
of any rank. The latter was done by showing that the isomorphic group $\Omega^+(6,\Fk)$ is not a string C-group of any rank,
and the exact same proof applies to $\Omega^-(6,\Fk)\cong{\rm PSU}(4,\Fk)$.
\smallskip

Next, suppose that $G=\Sp(4,\Fk)$; note $Z(G)=1$, so $G$ is simple. 
We may assume that $k\geq 2$, since $\Sp(4,\Bbb{F}_2)\cong{\rm Sym}(6)$ is not quasisimple.
(Moreover, the derived group $\Sp(4,\Bbb{F}_2)'\cong{\rm Alt}(6)$ is not a string C-group of rank $r\geq 4$.)
By Corollary~\ref{coro:main-sp}, $G$ is a string C-group of rank 5. 
However, $G$ is not a string C-group of rank $r\geq 6$ since none of its maximal subgroups
contain a string C-group of rank $r-1\geq 5$; see~\cite[Table 8.14]{BHR}.
It therefore remains to show that $G$ is also a string C-group of rank 4,
which we do by constructing it as such.
\smallskip

As usual, let $(\,,\,)$ be a nondegenerate symmetric bilinear form on $V$, and $\phi$ a nondegenerate
quadratic form of Witt index 1 whose associated symmetric form is $(\,,\,)$. Let $L=\la e,b\ra$
be a 2-dimensional subspace of the orthogonal space $V_{\phi}$ with $\phi(e)=(e,b)=0$ and $\phi(b)\neq 0$.
Fix $0\neq \alpha\in\F$ and let $\rho$ be the {\em pseudo-transvection}
\[
\rho\colon v\mapsto v+\alpha[(v,b)e+(v,e)b]+\alpha^2\phi(b)(v,e)e.
\]
Consider nonsingular vectors $u,w\not\in\la e\ra^{\perp}$ with $(u,w)=0$; we will refine the
selection of $u,w$ subject to these fundamental conditions.
Note, $U=\la u,L\ra$ is a 3-space with nonsingular radical $\la x\ra=\la u\ra^{\perp}\cap L$.
Similarly, $W=\la w,L\ra$ has nonsingular radical $\la y\ra=\la w\ra^{\perp}\cap L$.

Let us consider $U$. Note that $\rho$ induces the identity on $L$, but moves $\la u\ra$.
Hence, $\la \sigma_u,\rho\ra$ fixes both the radical $\la x\ra$ of $U$ and the line $\la u,u\rho\ra$.
By a suitable choice of $u$, one can arrange for $\la u,u\rho\ra$ to be asingular, and for the
group induced on $\la u,u\rho\ra$ by $\la \sigma_u,\rho\ra$ to be the full group $\Orth^-(\la u,u\rho\ra)\cong D_{2(q+1)}$
of isometries. Indeed, for such $u$ we have 
$\la \sigma_u,\rho\ra\cong\Orth^-(\la u,u\rho\ra)\times\la\sigma_x\ra \cong D_{4(q+1)}$. 

In the same way, we can select $w$ so that $\la \sigma_w,\rho\ra\cong \Orth^-(\la w,w\rho\ra)\times\la\sigma_y\ra
\cong D_{4(q+1)}$. 
Note that the condition $(u,w)=0$ ensures that $[\sigma_u,\sigma_w]=1$, so $\la \sigma_u,\rho,\sigma_w\ra$
is a string group. Furthermore, we have $\rho\in\Orth(V_{\phi})'=\Omega(V_{\phi})$, while
$\sigma_u,\sigma_w\in\Orth(V_{\phi})\setminus\Omega(V_{\phi})$. Hence, $(\sigma_u\rho)^2$ and $(\sigma_w\rho)^2$
are non-commuting elements of order $q+1$ in $\Omega^-(4,\F)\cong\SL(2,\Bbb{F}_{q^2})$, and so generate this group.
It follows that  $\la \sigma_u,\rho,\sigma_w\ra\cong\Orth^-(V_{\phi})$.

Consider the group $T$ of transvections inducing the identity on $\la x\ra^{\perp}=U$ and on $V/\la x\ra$. 
Note that $\sigma_x\in T\cong \F^+$
and is the only element of this group belonging to $\Orth(V_{\phi})$. As $q\geq 4$, 
there exists $\tau\in T\setminus\{1,\sigma_x\}$. 
The choice of $\tau$ ensures that $\la \sigma_u \rho,\sigma_w,\tau\ra$ is a string group.
Further, since $\Orth^-(V_{\phi})$ is maximal in $\Sp(V)$ and $\tau\not\in \Orth^-(V_{\phi})$,
we have $\Sp(V)=\la \sigma_u, \rho,\sigma_w,\tau\ra$.

Next, observe that $\la \rho,\sigma_w,\tau\ra$ fixes the plane $W$, and hence
$W^{\perp}=\la y\ra$. Indeed, $\la \rho,\sigma_w,\tau\ra$ preserves a quadratic form $\phi'$ on $W$ distinct from the restriction
of $\phi$, and the structure of the group is determined by whether or not $\la y\ra$ is nonsingular with respect to this 
form, as described in the proof of Lemma~\ref{lem:plane}. 
For us, what matters in each case is the intersection
of that group with $\la \sigma_u,\rho,\sigma_w\ra$. To that end, we consider the stabilizer in $\la \rho,\sigma_w,\tau\ra$
of the line $\la w,w\rho\ra$. If $\phi'(y)\neq 0$, then the stabilizer is $\la \rho,\sigma_w\ra\times\la \sigma_y'\ra$,
where $\sigma_y'$ is the symmetry determined by $y$ relative to $\phi'$. This is an element of the group of transvections
inducing the identity on $\la y\ra^{\perp}$ and $V/\la y\ra$, but is distinct from $\sigma_y\in \la \rho,\sigma_w\ra$, so
$\sigma_y'\not\in \la \sigma_u,\rho,\sigma_w\ra$. If $\phi'(y)=0$, on the other hand, then the stabilizer of 
$\la w,w\rho\ra$ is simply $\la \rho,\sigma_w\ra$. Thus, in each case
\[
\la \sigma_u,\rho,\sigma_w\ra \cap \la \rho,\sigma_w,\tau\ra = \la \rho,\sigma_w\ra,
\]
so $(\Sp(V);\{ \sigma_u, \rho,\sigma_w,\tau \})$ satisfies the intersection
property by Proposition~\ref{prop:IP} and is therefore a string C-group, as required.
\end{proof}

For completeness, we describe suitable involutions $\sigma_u, \rho,\sigma_w,\tau$ 
explicitly as matrices relative to the ordered basis $u,e,b,w$. Let $\alpha\in\F^{\times}$ be such that
$\left[ \begin{smallmatrix} 0 & 1 \\ 1 & \alpha^2 \end{smallmatrix} \right]$ has order $q+1$.
Now, put
\[
\sigma_u:=\left[ \begin{smallmatrix} 1 & 0 & 0 & 0 \\ \alpha & 1 & 0 & 0 \\  0 & 0 & 1 & 0 \\ 0 & 0 & 0 & 1 \end{smallmatrix} \right],~~~
\rho:=\left[ \begin{smallmatrix} 1 & \alpha & \alpha & 0 \\ 0 & 1 & 0 & 0 \\ 0 & 0 & 1 & 0 \\ 0 & 1 + \alpha & \alpha & 1 \end{smallmatrix} \right],~~~
\sigma_w:=\left[ \begin{smallmatrix} 1 & 0 & 0 & 0 \\  0 & 1 & 0 & \alpha \\ 0 & 0 & 1 & 1 \\ 0 & 0 & 0 & 1 \end{smallmatrix} \right],~~~
\tau:=\left[ \begin{smallmatrix} 1 & 0 & 0 & 0 \\  0 & 1 & 0 & 0 \\ 0 & 0 & 1 & 0 \\ 0 & 0 & \alpha & 1 \end{smallmatrix} \right].
\]

\section{Concluding remarks}
To the reader interested in computer experimentation with abstract regular polytopes, we mention that
{\sc magma} code is available upon request from the authors to construct the generators exhibiting $\Orth^{\pm}(d,\Fk)$
as string C-groups of rank $d$, and $\Sp(d,\Fk)$ as a string C-group of rank $d+1$. 

As noted at the start of the preceding section, a problem of general interest is to determine the pairs $(G,r)$ such that
the finite simple group $G$ is a string C-group of rank $r$. The main results in this paper provide partial answers 
for the simple groups $\Sp(d,\Fk)$ and the almost simple groups $\Orth^{\pm}(d,\Fk)$, in particular showing that we 
can build abstract regular polytopes of arbitrarily high rank whose group of automorphisms is an orthogonal or 
symplectic group. In proving
Theorem~\ref{thm:low}, we further showed that as well as having polytopes of rank 5 the simple groups $\Sp(4,\Fk)$
have polytopes of rank 4. 

Many open questions remain. First, are the orthogonal groups $\Orth(d,\Bbb{F}_2)$ string C-groups? 
Theorem~\ref{thm:main-orth} shows they cannot be constructed as such just using symmetries, but the proof
of Theorem~\ref{thm:low} shows how other types of involutions can be exploited. (Computer 
experiments provide evidence that $\Orth(d,\Bbb{F}_2)$ is a string C-group of rank $d-1$.)
Secondly, are the simple groups $\Omega^{\pm}(2m,\Fk)$ string C-groups for $m\geq 3$? In the proof of
Theorem~\ref{thm:low} we remarked that $\Omega^{\pm}(6,\Fk)$ are {\em not} string C-groups of any rank.
The proof of this fact in~\cite{BL:psl4} relied on the very specific nature of the involutions in these groups
and the consequent geometric constraints imposed upon strings of involutions. In higher dimensions
there is a greater diversity of involution classes in $\Omega^{\pm}(2m,\Fk)$ and it is not known whether
they can be generated as string C-groups.

Perhaps the most interesting question in this line, though, is whether or not the most fundamental family of finite simple classical 
groups---the projective special linear groups---have polytopes of arbitrarily high rank. The first and third authors
showed in~\cite{BL:psl4} that $\PSL(4,\F)$ have polytopes of rank 4 when $q$ is odd, but no other constructions
are known.
\medskip

\noindent {\bf Acknowledgment.} The authors are grateful to Bill Kantor for suggesting the use of the Arf invariant,
which greatly simplified the calculations of the Witt indices of our orthogonal spaces.

\bibliographystyle{amsplain}

\begin{bibdiv}
\begin{biblist}

\bib{Arf}{article}{
  author={Arf, C.},
  title={Untersuchen \"{u}ber quadratische Formen in K\"{o}rper der Characteristic 2. I.},
  journal={J. Reine Angew. Math.},
  volume={183},
  pages={148--167},
  year={1943},
  review={\MR{0008069}}
}

\bib{BHR}{book}{
  author = {Bray, J. N.},
  author = {Holt, D. F.},
  author = {Roney-Dougal, C. M.},
  title = {The maximal subgroups of the low-dimensional finite classical groups},
  series = {London Mathematical Society Lecture Note Series},
  volume = {407},
   publisher = {Cambridge University Press},
   address = {Cambridge},
      year = {2013},
     pages = {xiv+438},
     review={\MR{3098485}}
}

\bib{BL:psl4}{article}{
  author={Brooksbank, P. A.},
  author={Leemans, D.},
  title={Polytopes of large rank for ${\rm PSL}(4,\Bbb{F}_q)$},
  journal={J. Algebra},
  volume={452},
  pages={390--400},
  year={2016},
  review={\MR{3591162}}
}

\bib{BV10}{article}{
author = {Brooksbank, P. A.},
author = {Vicinsky, D., A.},
title = {Three-dimensional classical groups acting on polytopes},
journal={Discrete Comput. Geom.},
volume= {44},
number = {3},
pages={654--659},
year={2010},
 review = {\MR{2679061 (2011g:52020)}}
}

\bib{CFLM}{article}{
  author={Cameron, P.}
  author={Fernandes, M. E.},
  author={Leemans, D.},
  author={Mixer, M.},
  title={Highest rank of a polytope for $A_n$},
  journal={Proc. London Math. Soc.},
  volume={115},
  pages={135--176},
  year={2017}
}

\bib{FL}{article}{
author={Fernandes, M. E.},
author={Leemans, D.},
title={Polytopes of high rank for the symmetric groups},
journal={Adv. Math.},
volume={228},
pages={3207--3222},
year={2011},
   review={\MR{2844941 (2012j:52025)}}
}

\bib{FLM1}{article}{
author={Fernandes, M. E.},
author={Leemans, D.},
author={Mixer, M.},
title={An extension of the classification of high rank regular polytopes},
journal={Trans. Amer. Math. Soc},
year={to appear}
}

\bib{FLM}{article}{
author={Fernandes, M. E.},
author={Leemans, D.},
author={Mixer, M.},
title={Polytopes of high rank for the alternating groups},
journal={J. Combin. Theory Ser. A},
volume={119},
pages={42--56},
year={2012},
   review={\MR{2844081 (2012h:52026)}}
}

\bib{FLM2}{article}{
author={Fernandes, M. E.},
author={Leemans, D.},
author={Mixer, M.},
title={All alternating groups $A_n$ with $n\geq 12$ have polytopes of rank $\lfloor (n-1)/2 \rfloor$},
journal={SIAM J Discrete Math.},
volume={26},
pages={482--498},
year={2012},
   review={\MR{???}}
}

\bib{ls07}{article}{
author={Leemans, D.},
author={Schulte, E.},
title={Groups of type {$L_2(q)$} acting on polytopes.},
journal={Adv. Geom.},
volume={7},
number={4},
pages={529--539},
year={2007},
   review={\MR{2360900 (2008i:20003)}}
}

\bib{Maz}{article}{
  author = {Mazurov, V.D.},
  title = {Generation of sporadic simple groups by three involutions, two of which commute},
  journal = {Sibirsk. Mat. Zh.},
  volume = {44},
  number = {1},
  pages = {193--198},
  year = {2003},
  review = {\MR{1967616 (2004g:20023)}}
}

\bib{MS2002}{book}{
    author = {McMullen, P.}
    author = {Schulte, E.},
     title = {Abstract regular polytopes},
    series = {Encyclopedia of Mathematics and its Applications},
    volume = {92},
 publisher = {Cambridge University Press},
   address = {Cambridge},
      year = {2002},
     pages = {xiv+551},
     review={\MR{1965665 (2004a:52020)}}
}

\bib{MS1}{article}{
  author={Monson, B.},
  author={Schulte, E.},
  title={Reflection groups and polytopes over finite fields. I},
  journal={Adv. in Appl. Math.},
  volume={33},
  year={2004},
  number={2},
  pages={290--317},
  review={\MR{2074400 (2005d:20069)}}
}

\bib{MS2}{article}{
  author={Monson, B.},
  author={Schulte, E.},
  title={Reflection groups and polytopes over finite fields. II},
  journal={Adv. in Appl. Math.},
  volume={38},
  year={2007},
  number={3},
  pages={327--356},
  review={\MR{2301701 (2008b:20045)}}
}

\bib{MS3}{article}{
  author={Monson, B.},
  author={Schulte, E.},
  title={Reflection groups and polytopes over finite fields. III},
  journal={Adv. in Appl. Math.},
  volume={41},
  year={2008},
  number={1},
  pages={76--94},
  review={\MR{2419764 (2009d:51020)}}
}

\bib{Nu1}{article}{
   author = {Nuzhin, Ya. N.},
   title={Generating triples of involutions of Chevalley groups
   over a finite field of characteristic 2},
   journal={Algebra and Logic},
   volume={29},
   year={1990},
   pages={134--143},
   review={\MR{1131150 (92j:20046)}}
}

\bib{Nu2}{article}{
   author = {Nuzhin, Ya. N.},
   title={Generating triples of involutions for Lie-type groups
   over a finite field of odd characteristic, II},
   journal={Algebra and Logic},
   volume={36},
   year={1997},
   pages={245--256},
   review={\MR{1601503 (98m:20021)}}
}

\bib{Taylor}{book}{
  author = {Taylor, D. E.},
  title = {The geometry of the classical groups},
  series = {Sigma series in pure mathematics},
  volume = {9},
  publisher = {Heldermann Verlag},
  address = {Berlin},
  year = {1992},
  pages = {xii+229}
  review = {\MR{1189139 (94d:20028)}}
  }

\end{biblist}
\end{bibdiv}

\end{document}